\def \C {{\mathbb C}}
\def \A {{\mathcal A}}
\def \S {{\mathcal S}}
\def \c {{\mathcal{CC}}}
\def \V {{\mathcal V}}
\def \Ll {{\mathcal L}}
\def \la {{\lambda}}
\newtheorem{theorem}{Theorem}[section]
\newtheorem{cor}[theorem]{Corollary}
\newtheorem{lemma}[theorem]{Lemma}
\newtheorem{obs}[theorem]{Observation}
\newtheorem{definition}{Definition}[section]
\newtheorem{rem}[theorem]{Remark}
\author{}
\title{Pattern polynomial graphs}
\author{A. Satyanarayana Reddy\footnote{Department of Mathematics and Statistics, Indian
Institute of Technology, Kanpur, India 208016; (e-mail:
satya@iitk.ac.in).} \and Shashank K Mehta \footnote{Department of Computer Science and
Engineering, Indian Institute of
Technology, Kanpur, India 208016; (e-mail: skmehta@cse.iitk.ac.in). This work was partly
supported by Research-I
Foundation, IIT-Kanpur.}}
\date{}
\begin{document}
\maketitle
\begin{abstract}
A graph $X$ is said to be a pattern polynomial graph if its adjacency algebra
is a coherent algebra. In this study we will find a necessary and sufficient condition
 for a graph to be a pattern polynomial graph.
 Some of the properties of the graphs which are polynomials in the  pattern polynomial graph
 have been studied. We also identify known graph classes which are pattern polynomial graphs.
\end{abstract}
{\bf{Keywords}}: Adjacency algebra of a graph, coherent algebra, automorphisms of a  graph, distance regular graphs.\\
{\bf{Mathematics Subject Classification(2010)}}: 05C50, 05E40,05E18

\section{Introduction and preliminaries}

Let $M_n(\C)$ denote the set of all $n\times n$ matrices over the
field of complex numbers $\C$.  Let $\C[A]$ denote the set of all matrices which are
polynomials in $A$ with coefficients from  $\C$. Clearly $\C[A]$ is
an algebra over $\C$ for any $A\in M_n(\C)$. The dimension of $\C[A]$ over $\C$ as a vector
space, is the degree of the minimal polynomial of $A$. If $A$ is
diagonalizable, then  from the  following lemma, it's dimension is
equal to the number of distinct eigenvalues of $A$.

\begin{lemma}[Hoffman $\&$ Kunze~\cite{H:K}] \label{lem:aa3}
A matrix is diagonalizable if and only if its minimal polynomial has
all distinct linear factors over $\C$.
\end{lemma}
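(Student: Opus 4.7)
The plan is to prove both implications separately; in each case the minimal polynomial factors completely over $\C$, so the content of the lemma is whether the linear factors are distinct.

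For the forward direction, I would take the distinct eigenvalues $\mu_1,\ldots,\mu_k$ of $A$, form the squarefree polynomial $p(x) = \prod_{i=1}^k (x-\mu_i)$, and show $p(A) = 0$ by diagonalising $A$ as $A = PDP^{-1}$ and noting that $p$ vanishes on every diagonal entry of $D$. Then the minimal polynomial $m(x)$ divides $p(x)$, and since any monic divisor of a squarefree polynomial is squarefree, $m$ also has distinct linear factors.

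For the converse, assume $m(x) = \prod_{i=1}^k (x-\mu_i)$ with the $\mu_i$ pairwise distinct. My goal is to exhibit an eigenbasis, and I would do so via a partial-fractions/Bezout argument. Set $p_i(x) = m(x)/(x-\mu_i)$. The $p_i$ share no common root, so $\gcd(p_1,\ldots,p_k)=1$ in $\C[x]$, and there exist polynomials $q_i$ with $\sum_i q_i(x) p_i(x) = 1$. Evaluating at $A$ gives $I = \sum_i q_i(A)p_i(A)$, so every $v \in \C^n$ can be written as $v = \sum_i w_i$ with $w_i = q_i(A)p_i(A)v$. Because $(A-\mu_i I)p_i(A) = m(A) = 0$ and $q_i(A)$ commutes with $A-\mu_i I$, each $w_i$ lies in $\ker(A-\mu_i I)$. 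Thus $\C^n$ is spanned by eigenvectors, so $A$ is diagonalizable.

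The main obstacle is the converse direction: one needs the right algebraic identity to produce the eigenspace decomposition from scratch. The Bezout step, together with the observation that $(A - \mu_i I) p_i(A) = 0$, is the substantive move; an alternative would be to invoke primary decomposition from module theory, but that is unnecessarily heavy here. The forward direction is a direct computation once the annihilating polynomial is chosen.
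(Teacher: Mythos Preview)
Your argument is correct in both directions. The forward implication is the obvious one, and your converse via the B\'ezout identity for the coprime polynomials $p_i(x)=m(x)/(x-\mu_i)$ is one of the standard textbook arguments; it cleanly produces the eigenspace projectors $q_i(A)p_i(A)$ summing to the identity. (A slight streamlining: you can avoid the abstract B\'ezout step by taking $q_i$ to be the constant $1/p_i(\mu_i)$, since $\sum_i p_i(x)/p_i(\mu_i)$ is a polynomial of degree $<k$ equal to $1$ at $k$ points, hence identically $1$.)

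As for comparison with the paper: there is nothing to compare. The paper does not prove this lemma; it simply records it with a citation to Hoffman and Kunze and moves on. Your proof is essentially the one found in that reference, so you have supplied exactly what the paper omits.
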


\begin{definition} Hadamard product of two $n\times n$
matrices $A$ and $B$ is denoted by  $A\odot B$ and is defined as
$(A\odot B)_{xy}=A_{xy}B_{xy}$.
\end{definition}
 Two $n\times n$ matrices $A$ and $B$ are said to be {\em disjoint}
if their Hadamard product is the zero matrix.

\begin{definition}
A subalgebra of $M_n(\mathbb{C})$ is called {\em coherent} if it contains
the matrices $I$ and $J$ and if it is closed under conjugate-transposition
and Hadamard multiplication. Here $J$ denotes the matrix with every entry being $1$.
\end{definition}

\begin{theorem}\label{thm:unique}\cite{B:R}
Every coherent algebra contains unique basis of mutually disjoint
$0,1$- matrices (matrices with entries either $0$ or $1$).
\end{theorem}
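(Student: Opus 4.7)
The plan is to construct the basis explicitly via an equivalence relation on index pairs, then show it is forced. Define a relation on positions by $(i,j) \sim (k,l)$ iff $A_{ij}=A_{kl}$ for every $A \in \mathcal{C}$. Since $\mathcal{C}$ is a subspace closed under intersection of level sets, this is clearly an equivalence relation; let $C_1, \ldots, C_r$ be its classes and let $E_\alpha$ be the $0,1$-matrix whose support is exactly $C_\alpha$. By construction the $E_\alpha$ are pairwise disjoint and sum to $J$, so they are linearly independent, and every $A \in \mathcal{C}$ is constant on each class and hence equals $\sum_\alpha c_\alpha(A)\, E_\alpha$. Thus once I prove $E_\alpha \in \mathcal{C}$ I will have a spanning family, and the disjointness automatically gives linear independence.

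The main step is showing $E_\alpha \in \mathcal{C}$, and this is where I would use the coherent-algebra hypotheses (particularly Hadamard closure and the presence of $J$, which is the identity for $\odot$). Pick a basis $A^{(1)}, \ldots, A^{(m)}$ of $\mathcal{C}$ as a vector space. For each $A^{(t)}$, its entries take only finitely many distinct values $a^{(t)}_1, \ldots, a^{(t)}_{k_t}$, so by Lagrange interpolation there is a univariate polynomial $p^{(t)}_j$ with $p^{(t)}_j(a^{(t)}_s) = \delta_{js}$. Replacing ordinary multiplication by the Hadamard product (with $J$ playing the role of $1$), the matrix
\[
M^{(t)}_j \;=\; p^{(t)}_j\!\left(A^{(t)}\right) \;=\; \sum_{\ell \ge 0} c^{(t)}_{j,\ell}\, \underbrace{A^{(t)} \odot \cdots \odot A^{(t)}}_{\ell}
\]
lies in $\mathcal{C}$ and is precisely the $0,1$-indicator of the set $\{(i,j) : A^{(t)}_{ij} = a^{(t)}_j\}$. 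Taking Hadamard products $M^{(1)}_{j_1} \odot \cdots \odot M^{(m)}_{j_m}$ over all tuples $(j_1,\ldots,j_m)$ yields the indicator of the joint level set where each $A^{(t)}$ takes a prescribed value. These joint level sets are exactly the equivalence classes $C_\alpha$, because two positions are equivalent iff every basis matrix agrees on them. Hence each $E_\alpha$ equals some such Hadamard product and lies in $\mathcal{C}$.

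For uniqueness, suppose $\{F_1,\ldots,F_s\}$ is any basis of $\mathcal{C}$ consisting of mutually disjoint $0,1$-matrices. If $(i,j)$ and $(k,l)$ both lie in the support of some common $F_\beta$, then for every $A = \sum_\gamma d_\gamma F_\gamma \in \mathcal{C}$ disjointness forces $A_{ij} = d_\beta = A_{kl}$, so $(i,j) \sim (k,l)$. Conversely, if $(i,j)\sim(k,l)$, then the unique $F_\beta$ whose support contains $(i,j)$ must satisfy $(F_\beta)_{kl}=(F_\beta)_{ij}=1$, so the supports match. Thus the support partition of $\{F_\beta\}$ coincides with the equivalence classes, forcing $\{F_\beta\}=\{E_\alpha\}$.

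The main obstacle I anticipate is the Hadamard-polynomial construction in step two: one must be careful that Lagrange interpolation transfers correctly from scalars to the Hadamard algebra (which it does precisely because $J$ is the $\odot$-identity and $\odot$ is commutative), and that the finite number of joint level sets coincides with the equivalence classes. Everything else, including uniqueness, is a bookkeeping consequence.
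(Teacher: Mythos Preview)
The paper does not supply a proof of this theorem; it is quoted from Brouwer--Cohen--Neumaier \cite{B:R} and used as a black box. Your argument is correct and is essentially the standard proof one finds in that reference: partition the index set $\{1,\dots,n\}^2$ by the relation ``all matrices in $\mathcal{C}$ agree here,'' show the class indicators lie in $\mathcal{C}$ via Hadamard-polynomial (Lagrange) interpolation using $J$ as the $\odot$-identity, and deduce uniqueness from the fact that any disjoint $0,1$-basis induces the same partition.

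One small point worth making explicit in your uniqueness paragraph: when you invoke ``the unique $F_\beta$ whose support contains $(i,j)$,'' you are using not only disjointness (at most one) but also that the supports cover all of $\{1,\dots,n\}^2$ (at least one). This follows because $J\in\mathcal{C}$ forces $J=\sum_\beta d_\beta F_\beta$, and since every entry of $J$ is $1$ while the $F_\beta$ are disjoint $0,1$-matrices, each position must lie in exactly one support (with $d_\beta=1$). With that line added, the proof is complete.
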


We call this unique basis of mutually disjoint 0-1 matrices as a {\emph{standard
basis}}.

\begin{cor}\label{cor:sb}
Every $0,1$-matrix in a coherent algebra is the  sum of one or more matrices in its standard basis.
\end{cor}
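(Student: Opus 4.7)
The plan is to unpack Theorem \ref{thm:unique} directly: the standard basis $\{E_1,\dots,E_k\}$ spans $\mathcal{A}$ as a vector space, so any matrix $M\in\mathcal{A}$, and in particular any $0,1$-matrix, admits a unique expansion $M=\sum_{i=1}^k c_i E_i$ with $c_i\in\mathbb{C}$. The task is then to show that the coefficients $c_i$ are forced to be $0$ or $1$.

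Before extracting the coefficients, I would first establish the auxiliary fact that $\sum_{i=1}^k E_i = J$, i.e.\ that the $1$-positions of the $E_i$ partition the set of all entries. To see this, write $J=\sum_i d_i E_i$ (possible because $J$ lies in the coherent algebra). Fix a position $(x,y)$; by disjointness at most one $E_i$ has a $1$ at $(x,y)$. If no $E_i$ is nonzero at $(x,y)$, the right-hand side vanishes there, contradicting $J_{xy}=1$; if $E_j$ is the unique such index, then $J_{xy}=d_j$, forcing $d_j=1$. Hence every entry is covered by exactly one $E_i$, and every $d_i=1$.

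With this partition in hand, the corollary follows immediately. For any position $(x,y)$, let $j$ be the unique index with $(E_j)_{xy}=1$. Then $M_{xy}=\sum_i c_i(E_i)_{xy}=c_j$, and since $M$ is a $0,1$-matrix, $c_j\in\{0,1\}$. As every index $j$ arises in this way (because the $E_i$ cover all positions), each $c_i$ lies in $\{0,1\}$, and $M=\sum_{i:\,c_i=1}E_i$ is a sum of standard basis matrices. There is no serious obstacle here: the real content sits in Theorem \ref{thm:unique}, and the only subtle point is the partition fact, which is why I isolate it as a preliminary step since it is precisely where the hypothesis $J\in\mathcal{A}$ gets used.
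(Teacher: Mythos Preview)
Your proof is correct, but the paper takes a shorter route via Hadamard idempotence. Since $B$ is a $0,1$-matrix one has $B=B\odot B$; expanding both sides in the standard basis and using $M_i\odot M_j=\delta_{ij}M_i$ yields $\sum_i a_iM_i=\sum_i a_i^2M_i$, so $a_i^2=a_i$ by uniqueness of coordinates. Your argument instead first establishes the partition identity $\sum_i E_i=J$ and then reads off each coefficient at a suitable entry. Both approaches are valid: the paper's is a one-liner exploiting closure under Hadamard product, while yours makes the partition property explicit (a fact used implicitly later in the paper) and relies only on $J\in\mathcal{A}$. One minor remark: your claim that ``every index $j$ arises'' is justified because each $E_j$, being a basis vector, is nonzero---not merely because the $E_i$ collectively cover all positions.
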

\begin{proof}
Let $\mathcal{M}$ be a coherent algebra over $\C$ with its
standard basis $\{M_1,\ldots M_t\}$. Let $B\in \mathcal{M}$ be a
$0,1$-matrix, then $B=\sum_{i=1}^ta_iM_i$ where $a_i\in \C$.
$B=B\odot B=\sum_{i=1}^ta_i^2M_i\Rightarrow a_i^2=a_i$. Hence the
result follows.
\end{proof}

\begin{cor}\label{cor:leq}
If $\mathcal{M}$ is a commutative coherent algebra over $\C$, then $\dim(\mathcal{M})\leq n$.
\end{cor}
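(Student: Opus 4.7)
The plan is to exploit the interaction between commutativity and closure under conjugate-transposition to show that every element of $\mathcal{M}$ is a normal matrix, and then invoke simultaneous unitary diagonalization.

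First, for any $A \in \mathcal{M}$, closure under conjugate-transposition gives $A^* \in \mathcal{M}$. Since $\mathcal{M}$ is commutative, $A A^* = A^* A$, so every element of $\mathcal{M}$ is normal. More generally, any two elements $A, B \in \mathcal{M}$ satisfy $AB = BA$, so $\mathcal{M}$ is a commuting family of normal matrices.

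Next, I would invoke the standard spectral-theoretic fact that a commuting family of normal matrices in $M_n(\mathbb{C})$ is simultaneously unitarily diagonalizable: there exists a single unitary $U \in M_n(\mathbb{C})$ such that $U^* A U$ is diagonal for every $A \in \mathcal{M}$. Define $\phi : \mathcal{M} \to M_n(\mathbb{C})$ by $\phi(A) = U^* A U$. Then $\phi$ is an injective $\mathbb{C}$-algebra homomorphism (conjugation by a unitary is invertible and preserves both the vector space and multiplicative structure), and its image lies in the subalgebra $\mathcal{D}$ of diagonal $n \times n$ matrices.

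Since $\dim_{\mathbb{C}}(\mathcal{D}) = n$, we immediately obtain $\dim_{\mathbb{C}}(\mathcal{M}) = \dim_{\mathbb{C}}(\phi(\mathcal{M})) \leq n$. The only nontrivial ingredient is the simultaneous diagonalization of a commuting family of normal matrices, which is a standard linear algebra result (and is consistent with Lemma~\ref{lem:aa3} cited earlier, since normality implies the minimal polynomial splits into distinct linear factors); the rest of the argument is routine bookkeeping about algebra homomorphisms. Thus there is no real obstacle beyond citing the spectral theorem correctly.
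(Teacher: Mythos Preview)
Your argument is correct, but it proceeds along a genuinely different route from the paper. The paper's proof is combinatorial: since $J\in\mathcal{M}$ and $\mathcal{M}$ is commutative, every matrix in $\mathcal{M}$ commutes with $J$ and hence has constant row sums; applying this to the standard basis $\{M_1,\dots,M_t\}$ of Theorem~\ref{thm:unique}, each $M_i$ is a nonzero $0,1$-matrix with constant row sum $r_i\geq 1$, and since the $M_i$ are disjoint and sum to $J$ we get $\sum_i r_i=n$, forcing $t\leq n$.

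Your spectral approach instead uses only closure under conjugate-transposition together with commutativity, ignoring the Hadamard structure and the presence of $J$ entirely. In that sense it actually proves something stronger: any commutative $*$-closed subalgebra of $M_n(\C)$ has dimension at most $n$. The trade-off is that you must invoke simultaneous unitary diagonalization of a commuting family of normal matrices, which is heavier machinery than the paper's elementary counting. The paper's argument, by contrast, stays within the combinatorial framework of coherent algebras and their standard bases, which is the language used throughout the rest of the work.
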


\begin{proof}
Since $J$ commutes with every element in $\mathcal{M}$. Hence all the row(column) sums of every matrix in $\mathcal{M}$ are equal. Consequently the number of elements in the standard basis of $\mathcal{M}$ is atmost $n$, as the matrices in the standard basis of $\mathcal{M}$  are disjoint and whose sum is $J$.
\end{proof}

\begin{obs} \label{obs:int}
The intersection of coherent algebras is again a coherent algebra.
\end{obs}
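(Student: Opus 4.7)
The proof is essentially a routine verification that each of the defining properties of a coherent algebra is preserved under intersection, since each property is of the form ``this element/operation stays inside the set.''

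The plan is to fix a family $\{\mathcal{M}_\alpha\}_{\alpha \in I}$ of coherent subalgebras of $M_n(\mathbb{C})$, set $\mathcal{M} = \bigcap_\alpha \mathcal{M}_\alpha$, and verify each clause of the definition in turn. First I would check that $\mathcal{M}$ is a subalgebra: closure under addition, scalar multiplication, and ordinary matrix multiplication follows from the fact that an arbitrary intersection of subalgebras of $M_n(\mathbb{C})$ is again a subalgebra, by the standard element-chasing argument ($A, B \in \mathcal{M}$ means $A, B \in \mathcal{M}_\alpha$ for every $\alpha$, hence $AB, A+B, cA \in \mathcal{M}_\alpha$ for every $\alpha$, hence in $\mathcal{M}$).

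Next I would handle the two distinguished matrices: since $I, J \in \mathcal{M}_\alpha$ for every $\alpha$, we have $I, J \in \mathcal{M}$. Then I would verify closure under conjugate-transposition: if $A \in \mathcal{M}$, then $A \in \mathcal{M}_\alpha$ for every $\alpha$, so $A^* \in \mathcal{M}_\alpha$ for every $\alpha$, hence $A^* \in \mathcal{M}$. The same template handles closure under Hadamard product: $A, B \in \mathcal{M}$ forces $A \odot B \in \mathcal{M}_\alpha$ for every $\alpha$, giving $A \odot B \in \mathcal{M}$.

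Since every defining condition of a coherent algebra is a universally quantified closure statement, no genuine obstacle arises; the argument is purely a matter of writing down the element-wise chase cleanly. If there is any subtlety at all, it is only in making sure that one is indeed taking the intersection as subalgebras of the same ambient $M_n(\mathbb{C})$ (so that $I$ and $J$ refer to the same matrices in each $\mathcal{M}_\alpha$), which is implicit in the statement. I would therefore present the proof as one short paragraph consisting of these four verifications.
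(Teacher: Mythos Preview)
Your proposal is correct and complete; the paper itself gives no proof at all, simply recording the statement as an observation, so your routine element-chasing verification is exactly the argument one would supply if asked to fill in the details.
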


\begin{definition}
 Let $A\in M_n(\C)$,  then coherent closure of $A$, denoted
by $\langle \langle A \rangle \rangle$ or $\c(A)$, is the smallest
coherent algebra containing $A$.
\end{definition}

In  Section~\ref{sec:one} we will see a  necessary and sufficient
condition for any matrix $A\in M_n(\C)$ such that $\C[A]=\c(A)$. In
the remaining sections we consider $A$ to be  the adjacency matrix
of a graph $X$. If $A$ is the adjacency matrix  of a graph $X$ and
$\C[A]=\c(A)$, then $X$ will be called a pattern polynomial graph. Some of
the  properties of pattern polynomial graphs are given in 
Section~\ref{sec:two}. In  Section~\ref{sec:three} we will see  a
few graph classes which are pattern polynomial graphs. Few partially
balanced incomplete block designs from pattern polynomial graphs
are constructed in Section~\ref{sec:four}.   Properties of graphs
which are polynomials in the pattern polynomial graph are provided
in the Section~\ref{sec:five}.

\section{$\C[A]=\c(A)$}\label{sec:one}

In this section we will see a  necessary and sufficient condition for a matrix $A\in M_n(\C)$ such that $\C[A]=\c(A)$. For that we construct a vector space which  lies in between  $\C[A]\;and\;\c(A)$ as follows.

Let $\ell$ be  the degree of the minimal polynomial of $A$. Then
$\{I,A,\ldots, A^{\ell-1}\}$ is a basis for $\C[A]$ over $\C$.
Let $\textbf{y}=(y_0,y_1\ldots, y_{\ell-1})\in \C^\ell$ be a variable and   $$B(\textbf{y})=y_0I+y_1A+\dots y_{l-1}A^{\ell-1}=
\begin{bmatrix}
    p_{11}(\textbf{y}) & p_{12}(\textbf{y})& \dots  & p_{1n}(\textbf{y}) \\
    p_{21}(\textbf{y}) & p_{22}(\textbf{y})& \dots & p_{2n}(\textbf{y}) \\
        \vdots &\vdots & \ddots &   \vdots \\
        p_{n1}(\textbf{y}) & p_{n2}(\textbf{y})  & \dots & p_{nn}(\textbf{y})
     \end{bmatrix}.$$
Where $p_{ij}(\textbf{y})=p_{ij}(y_0,y_1\ldots, y_{\ell-1})$ is a polynomial in the variables  $y_0,y_1,\ldots, y_{\ell-1}$.
Let us assume $\{q_1(\textbf{y}),q_2(\textbf{y}),\ldots, q_r(\textbf{y})\}$ be the set of distinct polynomials in the matrix $B(\textbf{y})$.
We define the matrices called pattern matrices of $A$, as $P_j\;(1\leq j\leq r)$ as
$$(P_j)_{s,t}=\begin{cases} 1, & {\mbox{ if }} B(\textbf{y})_{s,t}=q_j(\textbf{y}), \\
 0, &  {\mbox{otherwise. }}
\end{cases}$$
Let $\Ll (A)=L\{P_1,P_2,\ldots,P_r\}$
denote the linear span of matrices $P_1,\ldots,P_r$. Then  $\Ll (A)$
is a subspace of $M_n(\C)$. From the definition of $\Ll(A)$ we have the following observation.

\begin{obs}\label{obs:smallest}
\begin{enumerate}
\item $P_i\odot P_j=0\;\forall i,j \;1\leq i,j \leq r$, $\sum_{i=1}^rP_i=J\in \Ll(X)$, $I\in \Ll(A)$.
\item $\Ll(A)$ is closed under Hadamard product.
\begin{proof}
Let $M,N\in \Ll(A)$, so $M=\sum_{i=1}^ra_iP_i\;, N=\sum_{i=1}^rb_iP_i$ where  $a_i,b_i\in \C$.  Then  $M\odot N=\sum_{i=1}^ra_ib_iP_i\in\Ll(A)$.
\end{proof}
\item $\Ll(A)$ is the smallest subspace of $M_n(\C)$ closed under Hadamard product and contains all powers of $A$. Consequently $\C[A]\subseteq \Ll(A) \subseteq \c(A)$ and $l\leq r$.
\item If $P_i^T\in \{P_1,P_2,\ldots,P_r\} \;for\;all\; 1\leq i \leq r$, then $\Ll(A)$ is also closed under conjugate transposition. In particular, if $A$ is symmetric, then all pattern matrices are symmetric hence $\Ll(A)$ is closed under conjugate transposition.
\end{enumerate}
\end{obs}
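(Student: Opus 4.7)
My plan centers on a single viewpoint: the entry of $B(\mathbf{y})$ at position $(s,t)$ is exactly the polynomial $\sum_{k=0}^{\ell-1} y_k A^k_{s,t}$, so two positions have the same entry polynomial iff they share the same coefficient tuple $\phi(s,t) := (A^0_{s,t}, A^1_{s,t}, \dots, A^{\ell-1}_{s,t}) \in \C^\ell$. Thus the pattern matrices $P_1, \dots, P_r$ are precisely the $0,1$-indicators of the fibers of the map $\phi$, and the distinct tuples attained are $r$ distinct vectors $v_1, \dots, v_r \in \C^\ell$. With this in hand, Part 1 is immediate: the fibers partition all index pairs, giving $P_i \odot P_j = 0$ for $i \neq j$ and $\sum_i P_i = J$; for $I \in \Ll(A)$, since the coefficient of $y_0$ in $B(\mathbf{y})_{s,t}$ equals $I_{s,t}$, every diagonal entry has an entry polynomial distinct from every off-diagonal one, so each $P_j$ is supported either wholly on or wholly off the diagonal, and summing the diagonal-supported pattern matrices recovers $I$. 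The inline argument handles Part 2.

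The substantive claim is Part 3. The inclusion $\C[A] \subseteq \Ll(A)$ follows from $A^k = \sum_j c_j^{(k)} P_j$, where $c_j^{(k)}$ is the constant value of $A^k$ on the $j$-th fiber, and $\Ll(A) \subseteq \c(A)$ holds because $\c(A)$ is closed under Hadamard product and contains all powers of $A$; the bound $\ell \leq r$ is then $\dim \C[A] = \ell \leq r = \dim \Ll(A)$, using the obvious linear independence of $P_1, \dots, P_r$ obtained by evaluating a dependence at any position in the support of a single $P_j$. The nontrivial assertion is minimality: any subspace $\mathcal{S}$ closed under Hadamard product and containing every $A^k$ must contain each $P_j$. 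My plan is multivariate Lagrange interpolation. Since $v_1, \dots, v_r$ are distinct vectors in $\C^\ell$, for each $j$ I can build a polynomial $g_j \in \C[t_0, \dots, t_{\ell-1}]$ with $g_j(v_i) = \delta_{ij}$; concretely, for each $i \neq j$ pick a coordinate $k_i$ on which $v_i$ and $v_j$ disagree and form a product of linear factors in $t_{k_i}$. Evaluating $g_j$ on $(I, A, \dots, A^{\ell-1})$ by iterated Hadamard products and linear combinations yields a matrix whose $(s,t)$-entry is $g_j(\phi(s,t)) = P_j(s,t)$, so $P_j \in \mathcal{S}$. I expect the main technical care to be in formalizing this entrywise polynomial evaluation inside $\mathcal{S}$, which rests on the identity $(M \odot N)_{s,t} = M_{s,t} N_{s,t}$ that lets arbitrary polynomial expressions in the entries of the generators be realized by linear combinations of Hadamard products of those generators.

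Part 4 is short. Each $P_j$ is a real $0,1$-matrix, so $\overline{P_j} = P_j$ and conjugate transposition restricts to transposition on the basis; if $\{P_1, \dots, P_r\}$ is stable under transposition, then the map $\sum a_i P_i \mapsto \sum \overline{a_i} P_i^T$ sends $\Ll(A)$ into itself. For symmetric $A$, every $A^k$ is symmetric, so $B(\mathbf{y})$ is entrywise symmetric as a matrix of polynomials, forcing $\phi(s,t) = \phi(t,s)$ and hence $P_j = P_j^T$ for every $j$.
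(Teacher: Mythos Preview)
The paper treats this Observation as essentially self-evident: the only piece it actually argues is Part~2, via the two-line computation you correctly defer to. Your treatments of Parts~1 and~4 are sound and more careful than anything the paper supplies; in particular, your reason that diagonal and off-diagonal positions can never share an entry polynomial (the $y_0$-coefficient separates them) is exactly why $I$ is a sum of pattern matrices. Your proofs that $\C[A]\subseteq\Ll(A)$ (via $A^k=\sum_j (v_j)_k P_j$) and that the $P_j$ are linearly independent are also correct.

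For the minimality assertion in Part~3 your Lagrange-interpolation idea is the natural approach and goes well beyond what the paper offers (the paper gives no argument). But there is a real gap. Your separating polynomial $g_j$ will in general carry a nonzero constant term, and when you realise it entrywise via Hadamard products that constant term contributes a scalar multiple of $J$. A subspace $\mathcal S$ that is merely closed under $\odot$ and contains $A^0,\dots,A^{\ell-1}$ need not contain $J$: the Hadamard subalgebra generated by those matrices is spanned by the \emph{nonempty} Hadamard monomials, which entrywise corresponds to polynomials $g$ with $g(\mathbf 0)=0$. In fact the minimality claim, read literally, is false whenever the zero polynomial occurs among the $q_j$, i.e.\ when some $v_j=\mathbf 0$. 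A concrete instance: for $A=I_n$ with $n\ge 2$ one has $\ell=1$, the two pattern matrices are $I$ and $J-I$, so $\Ll(A)=\operatorname{span}\{I,J-I\}$; yet $\mathcal S=\operatorname{span}\{I\}$ is Hadamard-closed, contains every power of $A$, and is strictly smaller. Your argument is easily repaired in either of two ways: (i) add the hypothesis $J\in\mathcal S$, which holds for $\c(A)$ by the definition of a coherent algebra, so the consequence $\Ll(A)\subseteq\c(A)$ that the paper actually uses survives intact; or (ii) when every $v_j\ne\mathbf 0$, choose each interpolant $g_j$ to also vanish at $\mathbf 0$, which is possible since $\mathbf 0,v_1,\dots,v_r$ are then distinct points. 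Either route secures $\C[A]\subseteq\Ll(A)\subseteq\c(A)$ and $\ell\le r$, which is all the paper needs downstream.
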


With the above observation, we are providing the main result of this section.

\begin{theorem}
Let $A\in M_n(\C)$ be a symmetric matrix. Then $\C[A] = \c(A)$   if and only if $\ell=r$.
\end{theorem}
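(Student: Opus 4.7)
The plan is to sandwich $\Ll(A)$ between $\C[A]$ and $\c(A)$ and run a dimension count. By Observation~\ref{obs:smallest} we already have the chain
\[
\C[A] \;\subseteq\; \Ll(A) \;\subseteq\; \c(A),
\]
with $\dim \C[A]=\ell$. I first record that $\dim \Ll(A)=r$: the pattern matrices $P_1,\dots,P_r$ are nonzero $0,1$-matrices with pairwise disjoint supports (since $P_i\odot P_j=0$ for $i\ne j$), hence linearly independent, and so form a basis of $\Ll(A)$.

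For the forward direction, I would assume $\C[A]=\c(A)$; then the displayed chain collapses to equalities, which in particular forces $\dim \Ll(A)=\dim \C[A]$, i.e.\ $r=\ell$.

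For the converse, I would assume $\ell=r$. Then $\C[A]\subseteq\Ll(A)$ is an inclusion of vector spaces of equal dimension, so $\C[A]=\Ll(A)$. I would then verify that $\Ll(A)$ is a coherent algebra containing $A$. Closure under ordinary matrix multiplication is automatic, because $\Ll(A)=\C[A]$ is already an algebra; containment of $I$ and $J$ and closure under Hadamard product come directly from Observation~\ref{obs:smallest}; and for closure under conjugate-transposition I would use that, since $A$ is symmetric, each entry satisfies $p_{st}(\y)=p_{ts}(\y)$, so every pattern matrix $P_j$ is symmetric, and being a real $0,1$-matrix it also equals its conjugate. Hence $(\sum_i c_i P_i)^*=\sum_i \bar c_i P_i\in \Ll(A)$. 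Minimality of the coherent closure now gives $\c(A)\subseteq \Ll(A)$, and combining with the chain above yields $\C[A]=\Ll(A)=\c(A)$.

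The only genuine subtlety is in the converse: a priori $\Ll(A)$ is defined only as a Hadamard-closed linear span and is not evidently closed under ordinary matrix multiplication. This obstacle dissolves the moment the dimension equality $r=\ell$ identifies $\Ll(A)$ with the algebra $\C[A]$; the remaining items in the definition of a coherent algebra are then routine consequences of Observation~\ref{obs:smallest} together with the symmetry hypothesis on $A$.
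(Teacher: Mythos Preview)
Your proof is correct and follows essentially the same approach as the paper: both use the chain $\C[A]\subseteq\Ll(A)\subseteq\c(A)$ together with the dimension count $\dim\C[A]=\ell$, $\dim\Ll(A)=r$, and in the converse direction both obtain closure of $\Ll(A)$ under ordinary multiplication from the identification $\Ll(A)=\C[A]$, then invoke Observation~\ref{obs:smallest} and the symmetry of $A$ to conclude that $\Ll(A)$ is coherent. Your write-up is simply more explicit about the linear independence of the $P_i$ and the verification of conjugate-transposition closure, but the argument is the same.
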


\begin{proof}
If  $\C[A] = \c(A)$, then $\C[A]= \Ll(A)$ hence $\ell=r$. Conversely
suppose  $\ell=r$ then $\C[A]= \Ll(A)$. In particular $\Ll(A)$ is closed under ordinary multiplication. Hence from the above obseravtion $\Ll(A)$ is a coherent algebra but $\c(A)$ is the smallest coherent algebra containing $\C[A]$. Hence the result follows.
\end{proof}

\section{Pattern polynomial graphs}\label{sec:two}

From now onwards  we suppose that $A$ (or A(X)) is the adjacency
matrix of a graph $X$, hence $A$ is a symmetric $0,1$ matrix. We denote
$\C[A]$ by $\A(X)$, $\c(A)$ by $\c(X)$ and $\Ll(A)$ by $\Ll(X)$. We
call $\A(X)$ as the adjacency algebra of the  graph $X$, $\c(X)$ as
coherent closure of $X$.  First we will provide a few results on
adjacency algebra of a graph. Then we will see some additional
properties of the vector space $\Ll(X)$.  For two vertices $u$ and
$v$ of a connected graph $X$, let $d(u,v)$ denote the length of the
shortest path from $u$ to $v$. Then the diameter of a connected
graph $X=(V,E)$ is $\max\{d(u,v): \; u,v \in V\}$. It is shown in
Biggs~\cite{biggs} that if $X$ is a connected graph with diameter
$d$, then
\begin{equation}\label{eq:ell}
 d+1\leq \dim(\A(X)) \leq n
\end{equation}
where $\dim(\A(X))$ is the dimension of $\A(X)$ as a vector space
over $\C$.

A graph $X_1=(V(X_1), E(X_1))$ is said to be {\em isomorphic} to the
graph $X_2=(V(X_2), E(X_2))$, written $X_1\cong X_2$, if there is a
one-to-one correspondence $\rho: V(X_1)\rightarrow V(X_2)$ such that
$\{v_1, v_2\}\in E(X_1)$ if and only if  $\{\rho(v_1),
\rho(v_2)\}\in E(X_2)$. In such a case, $\rho$ is called an {\em
isomorphism} of $X_1$ and $X_2$. An isomorphism of a graph $X$ onto
itself is called an {\em automorphism}. The collection of all
automorphisms of a graph $X$ is denoted by $\text{Aut}(X)$. It is
well known   that $\text{Aut}(X)$ is a group under composition of
two maps. It is easy to see that $\text{Aut}(X) = \text{Aut}(X^c)$,
where $X^c$ is the complement of the graph X. If $X$ is a  graph
with $n$ vertices we can think of $\text{Aut}(X)$ as a subgroup of
$\S_n$. Under this correspondence, if a graph $X$ has $n$ vertices
then $\text{Aut}(X)$ consists of $n \times n$ permutation matrices
and for each $g\in \text{Aut}(X)$,  $P(g)$ will denote the
corresponding permutation matrix. The next result gives a method to
check whether a given permutation matrix is an element of
$\text{Aut}(X)$ or not.

\begin{lemma}[Biggs~\cite{biggs}] \label{lem:aut}
Let A be the adjacency matrix of a graph $X$. Then $g\in
\text{Aut}(X)$ is an automorphism of $X$ if and only if $P(g)A=A
P(g)$.
\end{lemma}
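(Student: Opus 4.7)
The plan is to rewrite the commutation relation $P(g)A = AP(g)$ entrywise and observe that it is precisely the condition that $g$ preserves adjacency. First I will fix the convention $(P(g))_{ij} = 1$ iff $g(i) = j$, so that left multiplication by $P(g)$ permutes rows by $g$ while right multiplication by $P(g)$ permutes columns by $g^{-1}$. A direct computation then yields $(P(g)A)_{ij} = A_{g(i), j}$ and $(AP(g))_{ij} = A_{i, g^{-1}(j)}$. Consequently $P(g)A = AP(g)$ is equivalent to the identity $A_{g(i), j} = A_{i, g^{-1}(j)}$ holding for every pair $i, j \in \{1, \ldots, n\}$.

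Next I would apply the substitution $j \mapsto g(j)$ to the identity above to put it in the symmetric form $A_{g(i), g(j)} = A_{ij}$ for all $i, j$. Since $A$ is a $0,1$ adjacency matrix, $A_{ij} = 1$ if and only if $\{i,j\} \in E(X)$, so this condition asserts exactly that $\{i,j\} \in E(X)$ if and only if $\{g(i), g(j)\} \in E(X)$, which is the definition of $g$ being an automorphism of $X$. Both implications of the `if and only if' in the lemma then follow from this chain of equivalences.

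The only point requiring care is the bookkeeping with the permutation-matrix convention: one must verify that left multiplication by $P(g)$ really permutes rows according to $g$ rather than $g^{-1}$, and that the substitution $j \mapsto g(j)$ legitimately converts the one-sided identity into the symmetric adjacency-preserving condition. Once the convention is fixed at the outset, the proof is essentially a one-line computation and there is no substantive obstacle.
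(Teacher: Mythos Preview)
Your argument is correct: the entrywise computation with the stated convention for $P(g)$ yields $(P(g)A)_{ij}=A_{g(i),j}$ and $(AP(g))_{ij}=A_{i,g^{-1}(j)}$, and the substitution $j\mapsto g(j)$ cleanly converts the commutation identity into $A_{g(i),g(j)}=A_{ij}$, which is exactly the adjacency-preserving condition. The paper itself does not supply a proof of this lemma---it is quoted from Biggs~\cite{biggs}---so there is nothing to compare against; what you have written is the standard one-line proof and would be entirely acceptable here.
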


The following result is very useful in this work, which provides the necessary and sufficient condition for a graph to be connected and  regular.

\begin{lemma}[Biggs~\cite{biggs}]\label{lem:J}
A graph $X$ is connected  regular if and only if $J\in \A(X)$.
\end{lemma}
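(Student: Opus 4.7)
The plan is to prove both implications using spectral properties of the symmetric adjacency matrix $A=A(X)$.

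Forward direction ($X$ connected and regular $\Rightarrow J\in\A(X)$): Suppose $X$ is $k$-regular on $n$ vertices. Regularity gives $A\on=k\on$, so $k$ is an eigenvalue of $A$ with eigenvector $\on$. A standard maximum-principle argument shows that connectedness forces the $k$-eigenspace to be one-dimensional and spanned by $\on$; equivalently, the corresponding spectral projection equals $\tfrac{1}{n}J$. Since $A$ is symmetric it is diagonalizable (invoking Lemma~\ref{lem:aa3}), and each spectral projection is a polynomial in $A$ obtained via Lagrange interpolation on the distinct eigenvalues $k=\lambda_1,\lambda_2,\dots,\lambda_m$: take
$$p(x)=\prod_{i=2}^{m}\frac{x-\lambda_i}{k-\lambda_i},$$
so that $p(A)=\tfrac{1}{n}J$, giving $J\in\C[A]=\A(X)$.

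Converse ($J\in\A(X)\Rightarrow X$ connected regular): Write $J=f(A)$ for some polynomial $f$. Then $JA=f(A)A=Af(A)=AJ$. The $(i,j)$-entry of $AJ$ is the $i$-th row-sum of $A$, while the $(i,j)$-entry of $JA$ is the $j$-th column-sum, so all row- and column-sums of $A$ are equal to the same constant; since the row-sums of $A$ are the vertex degrees of $X$, this means $X$ is regular. For connectedness I would argue by contraposition: if $X$ has components $C_1,\dots,C_s$ with $s\ge 2$, then after reordering vertices by component, $A$ is block-diagonal with $s$ blocks. Every polynomial in a block-diagonal matrix is block-diagonal with the same block structure, but $J$ has $1$'s in every entry, so $J$ cannot be written as $f(A)$, contradicting the hypothesis.

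The main obstacle is the spectral-projection step in the forward direction: one needs the simplicity of the Perron eigenvalue $k$ for a connected regular graph in order to identify the projection onto the $k$-eigenspace as $\tfrac{1}{n}J$ and thereby exhibit $J$ as a polynomial in $A$. The converse is essentially an observation: commutation with $A$ forces regularity, and the block-diagonal obstruction forces connectedness, so no further work is needed there.
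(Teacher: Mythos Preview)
Your proof is correct. The paper itself does not prove this lemma at all; it merely states it with a citation to Biggs~\cite{biggs}. Your argument is essentially the standard one found in that reference: use regularity and connectedness to identify the spectral projection onto the Perron eigenspace as $\tfrac{1}{n}J$, then realize that projection as a Lagrange interpolation polynomial in $A$; for the converse, use commutation $AJ=JA$ to force equal row-sums and the block-diagonal obstruction to force connectedness. There is nothing to compare against in the paper, and your write-up stands on its own.
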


\begin{cor}\label{cor:J}
 If $X$ is a regular graph then $J$ is polynomial in either $A$ or $A^c$.
\end{cor}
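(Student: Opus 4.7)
The plan is to split on whether $X$ is connected and invoke Lemma~\ref{lem:J} in each case. If $X$ is regular and connected, then Lemma~\ref{lem:J} immediately gives $J\in \A(X)=\C[A]$, so $J$ is a polynomial in $A$ and we are done. The real content lies in the case where $X$ is regular but disconnected; here I want to show that $X^c$ is connected and regular, so that the same lemma applied to $X^c$ forces $J\in \A(X^c)=\C[A^c]$.

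For the disconnected case, first note that if $X$ is $k$-regular on $n$ vertices, then every vertex of $X^c$ has degree $n-1-k$, so $X^c$ is again regular. The substantive step is to check that $X^c$ is connected. I would argue directly: given any two vertices $u,v$ in $X^c$, if they lie in different components of $X$ then $\{u,v\}$ is an edge of $X^c$; if they lie in the same component of $X$, pick any vertex $w$ in a different component of $X$ (which exists because $X$ is disconnected), and then $\{u,w\},\{w,v\}$ are both edges of $X^c$, giving a path of length two from $u$ to $v$ in $X^c$. Hence $X^c$ is connected.

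With $X^c$ being connected and regular, Lemma~\ref{lem:J} yields $J\in \A(X^c)$, i.e., $J$ is a polynomial in $A^c$, completing the second case.

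No step here looks like a genuine obstacle; the only place requiring a small argument is the standard observation that the complement of a disconnected graph is connected, which I would just verify in one line as above rather than quote as a separate fact.
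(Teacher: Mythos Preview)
Your proof is correct and follows essentially the same route as the paper: apply Lemma~\ref{lem:J} to whichever of $X$ or $X^c$ is connected, using that the complement of a regular graph is regular. The only difference is cosmetic---the paper simply quotes the standard fact that for every graph either $X$ or $X^c$ is connected, whereas you spell out the two-line argument for the disconnected case.
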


\begin{proof}
For every graph $X$, either $X$ or $X^c$ is connected. Hence the result follow from the above Lemma.
\end{proof}
Let $X$ be a connected regular graph. Then from the above lemma we have $A(X^c)\in \A(X)$. Hence we have the following corollary.

\begin{cor}
 Let $X$ be a connected regular graph. Then $X^c$ is connected if and only if
$\A(X)=\A(X^c)$.
\end{cor}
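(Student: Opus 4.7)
The plan is to exploit the identity $A(X) + A(X^c) + I = J$, together with Lemma~\ref{lem:J}, so that membership of $J$ in each adjacency algebra will control both directions. Before starting, I would note the observation already made in the paragraph preceding the corollary: since $X$ is connected and regular, $J \in \A(X)$ by Lemma~\ref{lem:J}, hence $A(X^c) = J - I - A(X) \in \A(X)$, and therefore $\A(X^c) \subseteq \A(X)$. This containment is free and will be used in both directions.

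For the forward direction, suppose $X^c$ is connected. Because the complement of a regular graph is regular, $X^c$ is connected and regular, so a second application of Lemma~\ref{lem:J} gives $J \in \A(X^c)$. Then
\[
A(X) = J - I - A(X^c) \in \A(X^c),
\]
so $\A(X) \subseteq \A(X^c)$, and combined with the preceding inclusion we get equality.

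For the reverse direction, assume $\A(X) = \A(X^c)$. From $J \in \A(X)$ we get $J \in \A(X^c)$. Since $X^c$ is regular (as the complement of a regular graph), Lemma~\ref{lem:J} now applies in the opposite direction to conclude that $X^c$ is connected.

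There is no real obstacle here; the only subtlety worth flagging is to make explicit why the complement of a connected regular graph is still regular (it follows from each vertex of $X^c$ having degree $n-1-k$ when $X$ is $k$-regular), since Lemma~\ref{lem:J} requires regularity to extract connectedness from $J \in \A(X^c)$.
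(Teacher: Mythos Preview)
Your proof is correct and follows exactly the line the paper intends: the paper records that $A(X^c)\in\A(X)$ (from Lemma~\ref{lem:J}) and states the corollary without further argument, while you have supplied the missing details in both directions using precisely that lemma and the identity $A(X)+A(X^c)+I=J$. Nothing to add.
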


\begin{definition}
A graph $X$ is said to be a {\emph{pattern polynomial graph}} if its pattern matrices are polynomials in the adjacency matrix of $X$.
\end{definition}

\begin{lemma}\label{lem:pp}
A graph $X$ is a pattern polynomial graph if and only if
$\A(X)=\c(X)$.
\end{lemma}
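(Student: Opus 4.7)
The plan is to exploit the sandwich $\C[A]\subseteq\Ll(X)\subseteq\c(X)$ from Observation~\ref{obs:smallest}, together with the fact that $\Ll(X)$ is precisely the linear span of the pattern matrices $P_1,\ldots,P_r$. Under this sandwich, the condition ``every $P_i$ is a polynomial in $A$'' is equivalent to ``$\Ll(X)\subseteq \C[A]$'', which is equivalent to ``$\C[A]=\Ll(X)$''. So the theorem will reduce to showing $\C[A]=\Ll(X)$ iff $\C[A]=\c(X)$.

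For the ``if'' direction, I would argue directly: if $\C[A]=\c(X)$, then since $P_i\in \Ll(X)\subseteq \c(X)=\C[A]$ for every $i$, each pattern matrix is a polynomial in $A$, so $X$ is a pattern polynomial graph. This uses only the containment $\Ll(X)\subseteq \c(X)$ from Observation~\ref{obs:smallest}(3).

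For the ``only if'' direction, assume each $P_i$ lies in $\C[A]$. Since $\Ll(X)$ is spanned by the $P_i$, this gives $\Ll(X)\subseteq \C[A]$; combined with the opposite inclusion from Observation~\ref{obs:smallest}(3), we obtain $\C[A]=\Ll(X)$. Now I want to promote this equality to $\C[A]=\c(X)$ by showing that $\Ll(X)$ is itself a coherent algebra containing $A$. Parts (1) and (2) of Observation~\ref{obs:smallest} give that $\Ll(X)$ contains $I$ and $J$ and is closed under Hadamard product; part (4) together with the symmetry of $A$ gives closure under conjugate-transposition; and closure under ordinary matrix multiplication is free because $\Ll(X)=\C[A]$ is already an algebra. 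Thus $\Ll(X)$ is a coherent algebra containing $A$, so by the minimality of $\c(X)$ we get $\c(X)\subseteq \Ll(X)=\C[A]$; the reverse inclusion is automatic. Hence $\C[A]=\c(X)$.

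I do not anticipate a genuine obstacle: once the role of $\Ll(X)$ as the intermediate object is recognized, the theorem is essentially a repackaging of Observation~\ref{obs:smallest} and the minimality of $\c(X)$. The only mildly subtle point is remembering to invoke symmetry of $A$ when applying Observation~\ref{obs:smallest}(4) to get closure under conjugate transposition; this is automatic in our setting since $A$ is the adjacency matrix of an undirected graph.
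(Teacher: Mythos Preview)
Your proposal is correct and mirrors the paper's own reasoning. The paper does not supply a separate proof of Lemma~\ref{lem:pp}; it is treated as an immediate consequence of the theorem in Section~\ref{sec:one} (that $\C[A]=\c(A)$ iff $\ell=r$), whose proof is precisely the argument you give: use the sandwich $\C[A]\subseteq\Ll(A)\subseteq\c(A)$, observe that ``each $P_i\in\C[A]$'' is equivalent to $\C[A]=\Ll(A)$, and then verify via Observation~\ref{obs:smallest} that $\Ll(A)$ is a coherent algebra once it equals $\C[A]$, so minimality of $\c(A)$ finishes.
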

Consequently if $X$ is a  pattern polynomial graph, then $\A(X)=\Ll(X)=\c(X)$.
For any graph $X$, we have $\Ll(X)=\Ll(X^c)$ and $\c(X)=\c(X^c)$. Hence we have the following result.

\begin{cor}\label{cor:comp}
Let $X$ be a pattern polynomial graph and $X^c$ is also connected. Then $\A(X)=\Ll(X)=\c(X)=\A(X^c)=\Ll(X^c)=\c(X^c)$.
\end{cor}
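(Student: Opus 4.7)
The plan is to reduce all six equalities to two facts: (i) the general identities $\Ll(X)=\Ll(X^c)$ and $\c(X)=\c(X^c)$ stated in the paragraph preceding the corollary, and (ii) the single new equality $\A(X)=\A(X^c)$, which is the only piece that actually requires the hypothesis. Once (ii) is in hand, the chain $\A(X)=\Ll(X)=\c(X)$ (immediate from $X$ being pattern polynomial via Lemma~\ref{lem:pp}) transports across the complement through (i) to give the matching chain on $X^c$.

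To establish $\A(X)=\A(X^c)$, I would first observe that the hypothesis $\A(X)=\c(X)$ forces $J\in\A(X)$ (since $J\in\c(X)$ by definition of a coherent algebra). By Lemma~\ref{lem:J} this means $X$ is connected and regular. Regularity of $X$ is equivalent to regularity of $X^c$, and combined with the assumption that $X^c$ is connected, Lemma~\ref{lem:J} applied again yields $J\in\A(X^c)$. From $A(X^c)=J-I-A$ and $J,I,A\in\A(X)$ we obtain $A(X^c)\in\A(X)$, hence $\A(X^c)\subseteq\A(X)$; the symmetric argument using $J\in\A(X^c)$ gives the reverse inclusion, so $\A(X)=\A(X^c)$. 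This is essentially the content of the corollary stated just before the definition of pattern polynomial graph, which I would simply cite.

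Putting everything together: from the pattern polynomial hypothesis, $\A(X)=\Ll(X)=\c(X)$; applying the complement identities gives $\Ll(X)=\Ll(X^c)$ and $\c(X)=\c(X^c)$; and the step above gives $\A(X)=\A(X^c)$. These equalities chain into the single run $\A(X)=\Ll(X)=\c(X)=\A(X^c)=\Ll(X^c)=\c(X^c)$. The only step with any content is recognizing that a pattern polynomial graph is automatically connected and regular, which unlocks the corollary already proved about $\A(X)=\A(X^c)$; there is no real obstacle beyond chasing these definitions.
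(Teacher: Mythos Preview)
Your argument is correct and is exactly the chain of reasoning the paper intends: the corollary is presented as an immediate consequence of Lemma~\ref{lem:pp} together with the identities $\Ll(X)=\Ll(X^c)$ and $\c(X)=\c(X^c)$, while the remaining link $\A(X)=\A(X^c)$ is precisely the earlier (unnumbered) corollary on connected regular graphs that you cite. There is nothing to add.
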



\subsection{Properties of pattern polynomial graphs}

In this subsection we prove that if $X$ is a pattern polynomial graph, then $X$ is necessarily a \\
a) connected regular graph b) distance polynomial graph c) walk
regular graph d) strongly distance-balanced graph. e) edge regular
graph whenever $A$ is also a pattern matrix. We also show that
 every pattern polynomial graph except $K_2$
(complete graph with 2 vertices) has at least one  multiple eigenvalue. In
particular, if $X$ is a pattern polynomial graph with odd number of
vertices, then we show that $\dim(\A(X))\leq \frac{n-1}{2}$.
Throughout this section we suppose that $X$ is a pattern polynomial
graph that is $\A(X)=\Ll(X)=\c(X)$ or $\ell=r$. From Lemma~\ref{lem:J} we have the following result.

\begin{lemma}
Every pattern polynomial graph is a connected regular graph.
\end{lemma}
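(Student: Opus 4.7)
The plan is essentially a one-step unpacking of the definitions combined with the invoked lemma of Biggs. Since $X$ is a pattern polynomial graph, by Lemma~\ref{lem:pp} we have $\A(X) = \c(X)$. By the definition of a coherent algebra, every coherent algebra contains the all-ones matrix $J$, so in particular $J \in \c(X) = \A(X)$.

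Once $J \in \A(X)$ is established, I would simply cite Lemma~\ref{lem:J} (Biggs), which gives precisely the equivalence: $J \in \A(X)$ if and only if $X$ is connected and regular. The conclusion that $X$ is a connected regular graph follows immediately.

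There is essentially no obstacle here; the lemma is designed to be a trivial consequence of the two facts already assembled in the paper, namely the definition of coherent algebra (which forces $J$ to lie in $\c(X)$) and Biggs' characterization of connected regular graphs via $J \in \A(X)$. The only thing worth noting in the write-up is the logical direction: we use the pattern polynomial hypothesis only to transfer $J$ from $\c(X)$ into $\A(X)$, after which the regularity and connectivity are read off directly from Lemma~\ref{lem:J}.
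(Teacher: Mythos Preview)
Your argument is correct and matches the paper's own reasoning exactly: the paper simply remarks ``From Lemma~\ref{lem:J} we have the following result,'' relying on $J\in\c(X)=\A(X)$ and then invoking Biggs' characterization, which is precisely what you do.
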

It is easy to see that the  converse  of the above lemma is not true.
The above  lemma provides a necessary condition to have $\A(X)=\Ll(X)$.
That is if the graph  $X$ is  either not  connected or not regular, then  $\A(X)\subsetneq \Ll(X)$.
  Now we will state another necessary condition stronger than this.\\

\noindent\textbf{Distance polynomial graph}\\

Let $X =
(V,E)$ be a connected graph with diameter $d$. The {\emph{$k$-th distance matrix}} $A_k (0\le k \le d)$ of $X$,  is defined as
$ (A_{k})_{rs} = \left\{ \begin{array}{cl} 1, & {\mbox{ if }} d(v_{r},v_{s}) = k \\
0, & {\mbox{ otherwise.}} \end{array} \right. $ \\  It follows that
$$
A_0=I\;\mbox{(Identity matrix)}, \;A_1=A,\; A_0+A_1+\dots +A_d=J.$$
A  connected graph $X$ of diameter $d$ is said to be a distance polynomial graph  if   $A_k\in \A(X) \;for\;0\le k \le d$. From the Lemma~\ref{lem:J}, the following observation is evident.

\begin{obs}\label{obs:dp}
Every distance polynomial graph is a regular connected graph. The converse is generally not true but every regular connected graph of diameter $2$ is distance polynomial.
\end{obs}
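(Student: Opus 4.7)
The plan is to handle the two assertions separately, both relying on Lemma~\ref{lem:J}.

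For the first assertion, suppose $X$ is a distance polynomial graph of diameter $d$. I would start from the identity $A_0+A_1+\cdots+A_d=J$ noted just above the statement. Since each $A_k$ lies in $\A(X)$ by hypothesis, and $\A(X)$ is a subspace, we get $J\in \A(X)$. Lemma~\ref{lem:J} then immediately yields that $X$ is connected and regular.

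For the converse half, I would let $X$ be a connected regular graph of diameter $2$. Then $A_0=I\in\A(X)$ and $A_1=A\in\A(X)$ automatically, so the only nontrivial distance matrix is $A_2$. Using the same identity $A_0+A_1+A_2=J$ I solve $A_2=J-I-A$. By Lemma~\ref{lem:J}, connectedness and regularity give $J\in\A(X)$, and therefore $A_2\in\A(X)$ as well. Hence every distance matrix lies in $\A(X)$, so $X$ is distance polynomial.

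The ``converse is generally not true'' clause requires nothing beyond a pointer to a counterexample: any connected regular graph of diameter at least $3$ whose adjacency algebra fails to contain $A_2$ (for instance a cycle $C_n$ with $n$ large enough, or more dramatically a vertex-transitive but non-distance-transitive graph of larger diameter) demonstrates this. Since this is only an informal aside, I would not produce a full verification, just cite an example in a sentence.

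The only potential subtlety is the initial observation that the distance matrix identity is a legitimate equality in $M_n(\C)$ and not only in $\Ll(X)$; this is immediate from the definition, since for every ordered pair $(r,s)$ there is a unique $k\in\{0,1,\dots,d\}$ with $d(v_r,v_s)=k$. So I do not anticipate any real obstacle; the whole observation is an application of Lemma~\ref{lem:J} once one notices that diameter $2$ forces the last distance matrix to be an affine combination of $I$, $A$, and $J$.
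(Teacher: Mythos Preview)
Your argument for both directions is correct and is exactly what the paper intends: it simply declares the observation ``evident'' from Lemma~\ref{lem:J} without spelling anything out, and your proof supplies precisely those details via $J=\sum_k A_k$ in one direction and $A_2=J-I-A$ in the other.

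One caution on the aside, however: the cycle $C_n$ is distance-transitive (hence distance-regular, hence distance polynomial) for every $n$, so its adjacency algebra \emph{does} contain every $A_k$ and it cannot serve as a counterexample. Your alternative suggestion, ``vertex-transitive but non-distance-transitive of larger diameter'', is also not sufficient, since such a graph may still be distance-regular or otherwise distance polynomial. If you want to point to a witness for ``the converse is generally not true'', you need a connected regular graph of diameter at least $3$ for which some $A_k$ genuinely fails to lie in $\A(X)$; such graphs exist but are not as readily named as $C_n$, and the paper itself does not exhibit one.
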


\begin{lemma}\label{lem:dp}
If $X$ be a connected graph of diameter $d$, then $A_k\in \Ll(X) \;(0\le k \le d)$, where $A_k$ is the $k$-th distance matrix of $X$.
\end{lemma}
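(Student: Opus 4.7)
\medskip

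\noindent\textbf{Proof proposal.} The plan is to show that the pattern classes of $B(\mathbf{y})=\sum_{i=0}^{\ell-1}y_iA^i$ are refinements of the distance classes; then each $A_k$ is just the sum of those $P_j$'s whose support lies in the $k$th distance class, giving $A_k\in\Ll(X)$ immediately.

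First I would unpack what it means for two entries of $B(\mathbf{y})$ to carry the same polynomial. By definition, $B(\mathbf{y})_{s,t}=\sum_{i=0}^{\ell-1}(A^i)_{s,t}\,y_i$, so entries $(s,t)$ and $(s',t')$ are assigned the same polynomial $q_j(\mathbf{y})$ (equivalently, belong to the support of the same pattern matrix $P_j$) if and only if
\[
(A^i)_{s,t}=(A^i)_{s',t'}\quad\text{for every }0\le i\le \ell-1.
\]
This is just the statement that the $y_i$-coefficients coincide. Consequently each pattern class is determined by the entire vector of walk-counts of length $<\ell$ between the two endpoints.

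Next I would connect this to distance. For $s\ne t$ the value $d(s,t)$ is the smallest $i\ge 1$ with $(A^i)_{s,t}>0$, and by (\ref{eq:ell}) we have $d\le \ell-1$, so this smallest index is always visible within the range $0,1,\dots,\ell-1$. Thus if $(s,t)$ and $(s',t')$ lie in the same pattern class, the two sequences $((A^i)_{s,t})_{i=0}^{\ell-1}$ and $((A^i)_{s',t'})_{i=0}^{\ell-1}$ agree, and hence their first nonzero positions agree, giving $d(s,t)=d(s',t')$. The diagonal case $s=t$ is separated from the off-diagonal case by the $i=0$ coordinate, since $(A^0)_{s,s}=1$ while $(A^0)_{s',t'}=0$ when $s'\ne t'$, so $(s,s)$ and any off-diagonal pair end up in different pattern classes as well.

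Having established that each pattern matrix $P_j$ has its support contained in exactly one distance class, say $\{(s,t):d(s,t)=k(j)\}$, it follows that
\[
A_k=\sum_{j\,:\,k(j)=k}P_j,
\]
and so $A_k\in \Ll(X)$ for every $0\le k\le d$, completing the proof. The only delicate point, which I would state carefully, is the use of the inequality $d+1\le \ell$ from (\ref{eq:ell}); without it there could in principle be distances that are not yet distinguished by the first $\ell$ powers of $A$, and the argument relating pattern classes to distance classes would break down.
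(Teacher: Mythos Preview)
Your proof is correct, but the route differs from the paper's. The paper argues by induction: assuming $A_0,\dots,A_{s-1}\in\Ll(X)$, it forms $M=A^s\odot(J-A_0-\cdots-A_{s-1})\in\Ll(X)$ using closure under Hadamard product, observes that the support of $M$ is exactly that of $A_s$, and then reads off $A_s$ as the sum of those $P_i$ appearing in $M$ with nonzero coefficient. Your argument is more structural: you identify a pattern class as a level set of the walk-count vector $((A^i)_{s,t})_{i<\ell}$, invoke~\eqref{eq:ell} to guarantee that the index $d(s,t)$ is visible in that range, and conclude directly that the pattern partition refines the distance partition. What you gain is a cleaner conceptual statement (in fact you prove outright what the paper only deduces afterwards from Corollary~\ref{cor:sb}, namely that each $A_k$ is a sum of pattern matrices), at the cost of an explicit dependence on the bound $d+1\le\ell$; the paper's inductive argument avoids citing that inequality and instead leans on the Hadamard closure of $\Ll(X)$.
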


\begin{proof}
We prove the result by induction on $d$.  $A_0(=I),A_1(=A)\in \A(X)\subseteq \Ll(X)$. So the result is true for $d=1$. Suppose that $A_0,A_1,\ldots, A_{s-1}\in \Ll(X)$. Then $J-I-A_0-A_1-\dots- A_{s-1}\in \Ll(X)$. Let $M=A^s\odot (J-I-A_1-\dots -
A_{s-1}) \in \Ll(X)$. Observe that $M_{ij}\neq 0$ if and only if
$(A_s)_{ij}=1$. As $M\in \Ll(X)$ we have
\begin{equation}\label{eq:M}
M=\sum_{i=1}^r a_iP_i\; \mbox{where}\; a_i\in \C.
\end{equation}
Hence $A_s=\sum_{i:a_i\ne 0}P_i\in \Ll(X)$.
\end{proof}
Now from Corollary~\ref{cor:sb}, every distance matrix is
the sum of one or more pattern matrices.

\begin{cor}\label{cor:ppdp}
Every pattern polynomial graph is a distance polynomial graph.
\end{cor}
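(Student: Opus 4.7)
The plan is to chain together three facts that are already in hand, all pointing in the same direction. First, I would invoke the earlier lemma showing that every pattern polynomial graph is connected and regular; in particular, the graph $X$ has a well-defined finite diameter $d$, so that the distance matrices $A_0, A_1, \ldots, A_d$ exist and satisfy $A_0 = I$, $A_1 = A$, and $A_0 + A_1 + \cdots + A_d = J$.

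Next, I would apply Lemma~\ref{lem:dp}, which asserts that for any connected graph of diameter $d$, each distance matrix $A_k$ lies in $\Ll(X)$. This is the inductive ingredient and does all the combinatorial work; nothing new has to be proved here because the lemma is already established.

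Finally, I would use the defining identity for pattern polynomial graphs, namely $\A(X) = \Ll(X) = \c(X)$ (as recorded right after Lemma~\ref{lem:pp}). Combining this with the previous step gives $A_k \in \A(X)$ for every $0 \le k \le d$, which is exactly the definition of a distance polynomial graph.

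Since each step is a direct citation of a previously established result, there is no real obstacle; the only thing to be careful about is invoking the connectedness and regularity first so that the notion of diameter, and hence the hypothesis of Lemma~\ref{lem:dp}, is in force. The proof should therefore be a three-line derivation: connectedness $\Rightarrow$ $\Ll(X)$ contains all distance matrices $\Rightarrow$ $\A(X)$ contains them.
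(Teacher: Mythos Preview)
Your argument is correct and matches the paper's intended reasoning: the corollary is stated immediately after Lemma~\ref{lem:dp} precisely because combining that lemma with the equality $\A(X)=\Ll(X)$ for pattern polynomial graphs (and the earlier fact that such graphs are connected) yields $A_k\in\A(X)$ for all $k$. The only minor remark is that Lemma~\ref{lem:dp} needs only connectedness, not regularity, so you could streamline slightly by citing connectedness alone.
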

From Observation~\ref{obs:dp}, every connected regular graph of diameter $2$ is distance polynomial. But all connected regular graphs of diameter $2$ are not pattern polynomial graphs.

\begin{definition}[Paul M.Weichsel~\cite{P:W1}]
Let $v$ be a vertex in the graph $X$ of diameter $d$. The
generalized degree of $v$ is the $d$-tuple $(k_1,k_2\ldots,k_d)$, where $k_i$ is the number of vertices whose distance from $v$ is $i$.
 The graph G is called {\emph{super-regular}} if each vertex has the same generalized degree.
\end{definition}

\begin{theorem}[Paul M.Weichsel~\cite{P:W1}]
Let $X$ be a connected graph of diameter $d$. $X$ is super-regular graph if and only if
$(A_iA_j)_{rs}=(A_jA_i)_{rs}$ for all $r,s$.
\end{theorem}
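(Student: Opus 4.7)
The plan is to translate matrix products into counts of intermediate vertices via the identity $(A_iA_j)_{rs}=|\{t: d(r,t)=i,\; d(t,s)=j\}|$, and to exploit the partition $I+A_1+\dots+A_d=J$.

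The easy direction ($\Leftarrow$) is immediate. Assuming $A_iA_j=A_jA_i$ for every $i,j$, I would sum the equality over $j$ to obtain $A_iJ=JA_i$. Reading off the $(r,s)$-entry yields $k_i(r)=k_i(s)$, where $k_i(v):=|\{t: d(v,t)=i\}|$. Since $r,s$ are arbitrary, each $k_i$ is independent of the chosen vertex, which is precisely super-regularity.

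For the converse ($\Rightarrow$), I would assume super-regularity, so that $A_iJ=JA_i=k_iJ$ for each $i$, and prove $A_iA_j=A_jA_i$ by strong induction on $i+j$. Base cases (where $i=0$ or $j=0$, or $i=j$) are immediate from $A_0=I$ and the symmetry of each $A_i$. For the inductive step, I would use the rewriting $A_d = J-I-A_1-\dots-A_{d-1}$: expanding the commutator $[A_i,A_d]$ through this substitution yields $[A_i,J]-\sum_{k=1}^{d-1}[A_i,A_k]$, in which the first bracket is zero by super-regularity, while each remaining bracket has strictly smaller index-sum and vanishes by the inductive hypothesis.

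The main obstacle is the case in which neither index equals $d$, because there is no direct analogue of the above rewriting for an interior distance matrix. The plan to handle it is a bootstrap: first establish commutativity of every pair involving $A_d$ via the expansion just described, then peel off the index $d$ and apply the same reasoning in the truncated setting, treating $A_{d-1}$ as the new top index against $J-I-A_1-\dots-A_{d-2}-A_d$. Combined with the symmetry $(A_iA_j)^T=A_jA_i$ and the centrality of $J$ in the algebra generated by $\{A_0,\dots,A_d\}$, this should force the whole family into a common commutative $*$-algebra, completing the argument.
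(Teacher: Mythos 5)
The backward implication in your proposal is fine: summing $A_iA_j=A_jA_i$ over $j$ gives $A_iJ=JA_i$, whose $(r,s)$ entry reads $k_i(r)=k_i(s)$. The forward implication, however, has a genuine gap, and it sits exactly where you flag ``the main obstacle.'' The only nontrivial consequence of super-regularity that your argument ever invokes is the family of relations $[A_i,J]=0$, i.e.\ $\sum_{k\ne i}[A_i,A_k]=0$ for each $i$. Every rewriting you propose --- $A_d=J-\sum_{k<d}A_k$, or the ``truncated'' version with $A_{d-1}$ played against $J-I-A_1-\dots-A_{d-2}-A_d$ --- is merely a reshuffling of these same relations, so the bootstrap is circular: to kill $[A_i,A_d]$ you need the interior commutators to vanish, and nothing in the scheme ever produces that. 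Concretely, for $d=3$ set $C_{ij}=[A_i,A_j]$; the three relations are $C_{12}+C_{13}=0$, $-C_{12}+C_{23}=0$ and $-C_{13}-C_{23}=0$, and the third is implied by the first two, leaving $C_{12}$ completely undetermined. Hence no linear algebra built solely on the centrality of $J$ and the antisymmetry of the commutator can force the individual commutators to vanish once $d\ge 3$; your induction on $i+j$ already breaks at the pair $(1,2)$, for which no base case and no expansion applies.

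What is missing is the actual distance combinatorics. Writing $(A_iA_j)_{rs}=|N_i(r)\cap N_j(s)|$, the claim is that for each fixed $r,s$ the array $p_{ij}=|N_i(r)\cap N_j(s)|$ is symmetric in $(i,j)$. Super-regularity says its $i$-th row sum and $i$-th column sum are both $k_i$, and the triangle inequality gives the support constraint $p_{ij}=0$ unless $|i-j|\le d(r,s)\le i+j$. When $d(r,s)=1$ these two facts already force symmetry (the array is tridiagonal and the equalities peel off from the corner: $p_{00}+p_{01}=1=p_{00}+p_{10}$, etc.), but for $d(r,s)\ge 2$ one needs a further induction on $d(r,s)$ passing through an intermediate vertex on a geodesic from $r$ to $s$; that counting argument is the substance of Weichsel's proof and is absent from your proposal. (Note the paper only cites this result from Weichsel and supplies no proof of its own, so there is nothing in the text to compare against; the assessment above is of your argument on its own terms.)
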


From the Corollary~\ref{cor:ppdp} every pattern polynomial graph is a super regular graph.
In the present literature super-regular graphs are also called 
distance-degree regular or strongly distance-balanced graphs for
details refer~\cite{S:P}.

\noindent\textbf{Walk-regular graph}\\
A graph $X$ is said to be
walk-regular if for each $s$, the number of closed walks of length $s$ starting at
a vertex $v$ is independent of the choice of $v$.

\begin{theorem}\cite{G:M}\label{thm:walk}
Let $A$ be the adjacency matrix of a  graph $X$.
Then  $X$ is walk-regular if and only if  the diagonal entries of $A^s\;\forall s$ are all equal.
\end{theorem}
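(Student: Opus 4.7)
The plan is to reduce the statement to the standard combinatorial interpretation of powers of the adjacency matrix, namely that $(A^s)_{uv}$ counts the number of walks of length $s$ from $u$ to $v$ in $X$. Once this interpretation is in hand, specializing $u=v$ identifies $(A^s)_{vv}$ with the number of closed walks of length $s$ starting at $v$, and the theorem becomes a direct transcription of the definition of walk-regularity.

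First I would establish the combinatorial identity $(A^s)_{uv}=\#\{\text{walks of length }s\text{ from }u\text{ to }v\}$ by induction on $s$. The base case $s=1$ holds because $A_{uv}\in\{0,1\}$ records whether $uv$ is an edge, i.e.\ the number of walks of length $1$ from $u$ to $v$. For the inductive step, write $A^{s+1}=A^s\cdot A$, so
\begin{equation*}
(A^{s+1})_{uv}=\sum_{w\in V}(A^s)_{uw}A_{wv},
\end{equation*}
and observe that the right-hand side partitions walks of length $s+1$ from $u$ to $v$ according to their penultimate vertex $w$, using the inductive hypothesis to count walks of length $s$ from $u$ to $w$ and the adjacency $A_{wv}$ to append the last edge.

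Next I would specialize to $u=v$: $(A^s)_{vv}$ equals the number of walks of length $s$ that begin and end at $v$, which is exactly the number of closed walks of length $s$ starting at $v$ in the sense of the definition preceding the theorem. Consequently, the condition ``the number of closed walks of length $s$ starting at $v$ is independent of $v$'' is the same as ``the function $v\mapsto (A^s)_{vv}$ is constant on $V$'', i.e.\ all diagonal entries of $A^s$ are equal. Quantifying over $s$ on both sides yields the claimed equivalence.

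There is no serious obstacle here; the only content is the standard walk-counting lemma for powers of $A$, and the rest is a bookkeeping translation between the combinatorial definition of walk-regularity and the matrix-entry reformulation. The mild care needed is to note that the equivalence must hold for every $s$ simultaneously, so the ``for all $s$'' quantifier in the theorem matches the ``for each $s$'' in the definition of walk-regularity.
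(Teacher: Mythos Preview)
Your argument is correct and is the standard one: the walk-counting interpretation of $(A^s)_{uv}$ is proved by induction exactly as you outline, and specializing to the diagonal immediately gives the equivalence with the definition of walk-regularity. Note, however, that the paper does not supply its own proof of this theorem; it is simply quoted from Godsil and McKay~\cite{G:M}, so there is no paper-proof to compare against. Your write-up would serve perfectly well as a self-contained justification in place of the citation.
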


The following lemma and corollary can be obtained from the Corollary~\ref{cor:leq} and above theorem. 

\begin{lemma}
Every pattern polynomial graph is a walk-regular graph.
\end{lemma}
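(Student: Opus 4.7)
The plan is to reduce the claim to Theorem~\ref{thm:walk}, which says that $X$ is walk-regular iff the diagonal of $A^s$ is constant for every $s\ge 0$. So the task becomes showing that $(A^s)_{ii}$ does not depend on $i$.

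The key idea is to isolate the diagonal inside the coherent algebra. Since $X$ is pattern polynomial, we have $\A(X)=\c(X)$, so every power $A^s$ lies in $\c(X)$. Because $I\in\c(X)$ and $\c(X)$ is closed under Hadamard multiplication, the matrix $D_s := I\odot A^s\in\c(X)$; note that $D_s$ is a diagonal matrix whose $i$-th diagonal entry is exactly $(A^s)_{ii}$.

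Next I would invoke the observation behind Corollary~\ref{cor:leq}: since $\c(X)=\C[A]$ is commutative, $J$ commutes with every element of $\c(X)$, and a direct computation shows that $MJ=JM$ forces all row sums and all column sums of $M$ to coincide. Applying this to $M=D_s$ and noting that the row sum of row $i$ of a diagonal matrix equals its $i$-th diagonal entry, we conclude $(A^s)_{ii}=(A^s)_{jj}$ for all $i,j$.

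Combining these facts, the diagonal of every $A^s$ is constant, and Theorem~\ref{thm:walk} finishes the proof. The argument is almost entirely bookkeeping; the only place that requires a moment's thought is the passage from ``$J$ commutes with $D_s$'' to ``$D_s$ has equal row sums'', which was already the substance of Corollary~\ref{cor:leq} and is the main obstacle only in the sense that one must notice the relevance of the Hadamard-closure of $\c(X)$ in extracting the diagonal.
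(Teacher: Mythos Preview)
Your argument is correct and is essentially the same as the paper's: the paper merely states that the lemma ``can be obtained from Corollary~\ref{cor:leq} and the above theorem'' (i.e., Theorem~\ref{thm:walk}), and your proof is exactly the natural unpacking of that hint---extract the diagonal via $I\odot A^s$ inside the Hadamard-closed algebra $\c(X)=\A(X)$, and then use the constant-row-sum consequence of $J$ commuting with everything (the content of the proof of Corollary~\ref{cor:leq}).
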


\begin{cor}\label{cor:regular}
If $X$ is pattern polynomial graph then every pattern matrix other than the identity  matrix  is the adjacency matrix of a regular graph.
\end{cor}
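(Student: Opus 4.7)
The hypothesis $\A(X)=\Ll(X)=\c(X)$ makes $\c(X)$ a commutative coherent algebra, so my plan is to extract two separate facts about each pattern matrix $P_i$: (a) constant row and column sums, and (b) zero diagonal whenever $P_i\neq I$. Symmetry of $P_i$ is free from Observation~\ref{obs:smallest}(4) since $A$ is symmetric, and once (a) and (b) are in place the $0,1$-matrix $P_i$ is manifestly the adjacency matrix of a regular graph.

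For (a), I would imitate the argument in Corollary~\ref{cor:leq}: since $\A(X)$ is commutative as a polynomial algebra in $A$, so is $\c(X)=\A(X)$, and in particular $J\in\c(X)$ commutes with every pattern matrix. Comparing a generic entry of $P_iJ$ with one of $JP_i$ shows the $j$-th row sum of $P_i$ equals the $k$-th column sum of $P_i$ for all $j,k$, giving constant row and column sums equal to some integer $k_i$.

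For (b), the idea is to show $I$ itself is one of the pattern matrices; then disjointness in the standard basis forces every other $P_i$ to vanish on the diagonal. Because $X$ is walk-regular by the preceding lemma, Theorem~\ref{thm:walk} gives $(A^s)_{jj}=(A^s)_{kk}$ for all $j,k,s$, so all diagonal entries of $B(\mathbf y)=\sum_s y_sA^s$ are equal as polynomials; call this common polynomial $q_1(\mathbf y)$. The coefficient of $y_0$ in $q_1(\mathbf y)$ is $1$, whereas off-diagonal entries of $B(\mathbf y)$ have no $y_0$ term, so $q_1(\mathbf y)$ occurs precisely on the diagonal and the corresponding pattern matrix is $P_1=I$. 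For $P_i\neq I$, disjointness $P_i\odot I=0$ yields zero diagonal, completing the verification.

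I do not expect any real obstacle: the walk-regularity lemma cited just above gives (b), and the commutativity of $\c(X)$ (inherited from $\A(X)$) gives (a) essentially for free. The only subtlety worth flagging in writing is the step identifying $P_1$ with $I$, where one must check that the polynomial $q_1(\mathbf y)$ cannot accidentally appear off the diagonal — and this is immediate from the $y_0$-coefficient observation.
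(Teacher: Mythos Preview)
Your proposal is correct and matches the paper's intended argument: the paper does not spell out a proof but merely states that the corollary ``can be obtained from Corollary~\ref{cor:leq} and the above theorem'' (i.e., Theorem~\ref{thm:walk}), and your two steps (a) and (b) are precisely the content of those two references --- commutation with $J$ forces equal row/column sums, and walk-regularity forces $I$ to be a pattern matrix so that the remaining $P_i$ have zero diagonal. Your explicit check via the $y_0$-coefficient that the diagonal polynomial $q_1(\mathbf y)$ cannot recur off the diagonal is a clean way to pin down $P_1=I$, which the paper leaves implicit.
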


\begin{rem}
Let $X$ be a pattern polynomial graph and $P\in\A(X)$ be a permutation matrix. Then from Corollary~\ref{cor:sb} and from the  above result $P$ is an element in the standard basis of $\A(X)$. Further it is easy to see that the set of all permutation matrices in $\A(X)$ forms  an elementary abelian 2-group since matrices in $\A(X)$ are symmetric.
\end{rem}

Let $X$ be a pattern polynomial graph  and $\{I=P_1,P_2,\ldots,
P_r\}$ be the set of its pattern matrices. Let us call the graph  $X_{P_i} \;2\leq
i\leq r$ as pattern graph of $X$  with adjacency matrix $P_i$. Then form
Lemma~\ref{lem:aut}, we have   $\text{Aut}(X) \subseteq
\text{Aut}(X_{P_i})$. In fact in the next section we will show that $\text{Aut}(X) \subseteq
\text{Aut}(X_{P_i})$ is true even if $X$ is not a pattern polynomial
graph. Now we will show that
every pattern polynomial graph except $K_2$ has at least one multiple
eigenvalue. In order to prove this, we need the following definition
and the result.

\begin{definition}
A graph is said to be vertex
transitive if its automorphism group acts transitively on $V$. That is for any
two vertices $x,y \in V, \exists g \in G$ such that $g(x)=y$.
\end{definition}

\begin{lemma}\label{H}[Biggs~\cite{biggs}]
Let $X$ be a $k$-regular vertex transitive graph, and $\lambda$ be a simple eigenvalue of $X$. Then $$\lambda = \left\{ \begin{array}{cl} k, & {\mbox{ if }}\; |V|\; \mbox{is odd}, \\
\mbox{one of the integers}\;2\alpha-k\;(\;0\leq \alpha \leq k),\; &  {\mbox{ if}}\;|V|\; \mbox{is even.} \end{array} \right. $$

\end{lemma}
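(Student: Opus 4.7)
The plan is to leverage the interplay between the automorphism group and the simple eigenvector, and then split into cases according to whether a certain index-two subgroup of $\text{Aut}(X)$ appears.

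First, fix a real unit eigenvector $v$ with $Av=\lambda v$. For any $g\in\text{Aut}(X)$, Lemma~\ref{lem:aut} gives $P(g)A=AP(g)$, so $A(P(g)v)=\lambda P(g)v$. Since $\lambda$ is simple, $P(g)v=c_g v$ for some scalar $c_g$. Because $P(g)$ is a real orthogonal matrix and $v$ is real, $c_g\in\{-1,+1\}$, and the assignment $\chi:g\mapsto c_g$ is a group homomorphism $\text{Aut}(X)\to\{\pm 1\}$. Let $H=\ker\chi$, so $[\text{Aut}(X):H]\in\{1,2\}$, and $v$ is constant on each $H$-orbit.

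Next, I would use vertex-transitivity to control the orbits of $H$. Since $\text{Aut}(X)$ is transitive on $V$ and $H$ is a subgroup of index $1$ or $2$, the $H$-orbits all have the same size, equal to $|V|$ or $|V|/2$. If $H=\text{Aut}(X)$, then $v$ is constant on $V$, so $v$ is a scalar multiple of the all-ones vector $\mathbf{1}$, and since $X$ is $k$-regular we get $\lambda=k$. In particular, whenever $|V|$ is odd, the only possibility is $H=\text{Aut}(X)$ (otherwise the two $H$-orbits would each have size $|V|/2$, a non-integer), which settles the odd case.

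Finally, suppose $|V|$ is even and $[\text{Aut}(X):H]=2$, with $H$-orbits $V_1,V_2$ of size $|V|/2$. Then $v$ takes only two values, one on $V_1$ and one on $V_2$; any $g\notin H$ swaps $V_1$ and $V_2$ and satisfies $P(g)v=-v$, which forces the two values to be negatives of each other, so $v=a(\mathbf{1}_{V_1}-\mathbf{1}_{V_2})$ for some $a\neq 0$. For $i\in V_1$, if $\alpha$ denotes the number of neighbors of $i$ in $V_1$, then
\begin{equation*}
(Av)_i \;=\; a\alpha - a(k-\alpha) \;=\; a(2\alpha-k),
\end{equation*}
and comparing with $\lambda v_i=\lambda a$ gives $\lambda=2\alpha-k$. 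The same count from a vertex of $V_2$ yields the complementary identity, so $\alpha$ is independent of the chosen vertex of $V_1$ (also forced by $H$-transitivity on $V_1$), and clearly $0\le\alpha\le k$, completing the even case.

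The conceptually delicate step is the passage from ``$P(g)v$ is a scalar multiple of $v$'' to a concrete description of $v$ via the orbits of $H=\ker\chi$; once vertex-transitivity pins the orbit sizes down to $|V|$ or $|V|/2$, the parity dichotomy and the bipartition-style eigenvalue computation in the second case are essentially forced.
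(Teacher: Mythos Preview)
The paper does not supply its own proof of this lemma; it is quoted from Biggs~\cite{biggs} and used as a black box. Your argument is correct and is, in fact, essentially the classical proof given in Biggs: exploit simplicity of $\lambda$ to get a sign character $\chi:\text{Aut}(X)\to\{\pm1\}$ on the eigenline, study the orbits of $H=\ker\chi$, and read off $\lambda$ from the resulting $\pm1$-valued eigenvector.

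One small point worth tightening: you split into the cases ``$H=\text{Aut}(X)$'' versus ``two $H$-orbits of size $|V|/2$'', but you do not explicitly rule out the a~priori possibility that $[\text{Aut}(X):H]=2$ while $H$ is still transitive on $V$. That case is in fact impossible (if $H$ is transitive then $v$ is constant, but then $P(g)v=-v$ for $g\notin H$ forces $v=0$), and once you note this your dichotomy is exhaustive. With that remark added, the proof is complete and matches the source argument.
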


\begin{cor}
If $X$ is a vertex transitive graph and $X\ne K_2$, then $X$ has at least one  multiple eigenvalue.
\end{cor}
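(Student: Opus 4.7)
The plan is to argue by contradiction: assume every eigenvalue of the adjacency matrix $A$ of $X$ is simple, and derive that $X = K_2$. The single substantive tool will be Lemma~\ref{H}. First I will record that a vertex-transitive graph is necessarily $k$-regular for some $k \ge 0$, since any automorphism taking one vertex to another preserves the degree. Writing $n = |V(X)|$, I then split on the parity of $n$.

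If $n$ is odd, Lemma~\ref{H} forces every simple eigenvalue of $X$ to equal $k$. The assumption that all eigenvalues are simple gives $n$ distinct eigenvalues, all of them equal to the single value $k$; this is consistent only when $n = 1$. The resulting $X = K_1$ is a degenerate single-vertex case that I read as silently excluded from the statement along with $K_2$.

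If $n$ is even, Lemma~\ref{H} confines each simple eigenvalue to the set $\{2\alpha - k : 0 \le \alpha \le k\} = \{-k,\,-k+2,\,\ldots,\,k-2,\,k\}$, which has exactly $k+1$ elements. The $n$ distinct eigenvalues therefore satisfy $n \le k+1$, and combined with the obvious bound $k \le n-1$ this forces $k = n-1$, i.e.\ $X \cong K_n$. Finally I will invoke the well-known spectrum of $K_n$, namely the simple eigenvalue $n-1$ together with $-1$ of multiplicity $n-1$, to conclude that "all simple" requires $n-1 = 1$, giving $n = 2$ and $X = K_2$, contradicting the hypothesis.

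The only mild obstacle is the counting step in the even case that pins $X$ down to $K_n$: one has to notice that the size of the admissible set of simple eigenvalues in Lemma~\ref{H} is $k+1$ and that the extremal regime $k = n-1$ is exactly the complete graph. Once we are there, the explicit spectrum of $K_n$ closes the argument with no further work, so there is no deeper technical obstruction.
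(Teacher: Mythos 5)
Your argument is correct and is exactly the deduction the paper intends: the corollary is stated without proof as an immediate consequence of Lemma~\ref{H}, and your case split on the parity of $n$, the count $n\le k+1$ in the even case forcing $X\cong K_n$, and the known spectrum of $K_n$ fill in precisely that deduction. Your handling of the degenerate $K_1$ case (which is a literal counterexample the paper silently ignores) is a reasonable reading of the statement.
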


\begin{cor}
If $X$ is a pattern polynomial graph and $X\ne K_2$, then $X$ has a multiple eigenvalue.
\end{cor}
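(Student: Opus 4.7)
The plan is to prove the contrapositive: if $X$ is a pattern polynomial graph in which every eigenvalue of $A$ is simple, then $X = K_2$. My strategy is to show that such an $X$ must be vertex transitive and then invoke the preceding corollary.

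Under this hypothesis, $\dim \A(X) = n$, so the standard basis of $\c(X) = \A(X)$ consists of $n$ pairwise disjoint symmetric $0,1$-matrices $I = P_1, P_2, \ldots, P_n$ summing to $J$ (symmetry comes from Observation \ref{obs:smallest}(4) together with Corollary \ref{cor:sb}). By Corollary \ref{cor:regular}, each $P_i$ with $i \ge 2$ is the adjacency matrix of a regular graph of some valence $k_i \ge 1$, and comparing row sums in $\sum_i P_i = J$ yields $1 + \sum_{i \ge 2} k_i = n$. With $n-1$ summands each at least $1$, this forces $k_i = 1$ for all $i \ge 2$. Hence every nonidentity pattern matrix is a symmetric $0,1$-matrix of valence $1$ with zero diagonal, i.e., the permutation matrix of a fixed-point-free involution of $V(X)$.

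By the remark following Corollary \ref{cor:regular}, the permutation matrices in $\A(X)$ form an elementary abelian $2$-group $G$; in our situation this is exactly $G = \{I, P_2, \ldots, P_n\}$, an abelian group of order $n$. Since every nonidentity element of $G$ acts on $V(X)$ without fixed points and $|G| = |V(X)|$, the action of $G$ on $V(X)$ is regular, so $X$ is vertex transitive. The preceding corollary then forces $X = K_2$, completing the contrapositive.

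The main obstacle I expect is the valence count that pins every $k_i$ ($i \ge 2$) to exactly $1$; this rests on Corollary \ref{cor:regular} and the identity $\sum_i P_i = J$. Once that is settled, the transition from symmetric permutation matrices with no fixed points to a regular, hence transitive, group action is immediate, and the previous corollary supplies the conclusion.
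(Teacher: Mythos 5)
Your proof is correct and follows essentially the same route as the paper's: the paper likewise argues that if all eigenvalues were simple then $\dim\A(X)=n$, forcing every pattern matrix to be a symmetric permutation matrix, whence $X$ is vertex transitive and the preceding corollary applies. You have merely filled in the details (the valence count pinning each $k_i$ to $1$, and the regular action of the resulting elementary abelian $2$-group) that the paper leaves implicit.
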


\begin{proof}
If  all eigenvalues of $X$ are simple, then $\dim (\A(X))=n$ so
every pattern matrix is a symmetric permutation matrix  whose sum
is $J$. Consequently $X$ is a vertex transitive graph.
\end{proof}

We can extend the above result to arbitrary graphs in the following manner.

\begin{cor}\label{cor:mul}
Let $X$ be a graph with $n>2$ vertices and $\c(X)$ is a commutative
algebra. Then $\dim(\A(X))\leq n-1$ and  $\dim(\c(X))\leq n$.
\end{cor}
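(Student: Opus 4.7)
The plan is to deduce both inequalities from Corollary~\ref{cor:leq} together with the previous corollary (pattern polynomial graphs other than $K_2$ have a multiple eigenvalue), using the containment $\A(X) \subseteq \c(X)$ as the bridge.

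First I would dispatch the easier bound. Since $\c(X)$ is assumed commutative, Corollary~\ref{cor:leq} applied directly to $\mathcal{M} = \c(X)$ gives $\dim(\c(X)) \leq n$.

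Next, for $\dim(\A(X)) \leq n-1$, I would argue by contradiction: suppose $\dim(\A(X)) = n$. Because $\A(X) \subseteq \c(X)$ and $\dim(\c(X)) \leq n$ by the previous step, equality is forced throughout, i.e.\ $\A(X) = \c(X)$. By Lemma~\ref{lem:pp}, $X$ is then a pattern polynomial graph. Since $n > 2$, we have $X \neq K_2$, so the preceding corollary supplies a multiple eigenvalue of $A$. On the other hand, $A$ is symmetric hence diagonalizable, so by Lemma~\ref{lem:aa3} the dimension of $\C[A] = \A(X)$ equals the number of distinct eigenvalues of $A$; the assumption $\dim(\A(X)) = n$ therefore forces all $n$ eigenvalues of $A$ to be distinct, contradicting the existence of a multiple eigenvalue.

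The argument is essentially a squeeze between $\A(X)$ and $\c(X)$, so there is no serious obstacle; the only point that requires a moment of care is the translation from the dimension assumption $\dim(\A(X)) = n$ to the statement that $A$ has $n$ simple eigenvalues, which hinges on the symmetry of $A$ and the characterization of diagonalizable matrices in Lemma~\ref{lem:aa3}. Everything else is bookkeeping using results already established in the excerpt.
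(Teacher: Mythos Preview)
Your proof is correct and follows essentially the same route as the paper: the paper also invokes Corollary~\ref{cor:leq} for the bound $\dim(\c(X))\le n$ and then argues by contradiction that $\dim(\A(X))=n$ would force a multiple eigenvalue via the preceding corollary. The only difference is that you make explicit the squeeze $\A(X)=\c(X)$ (hence $X$ pattern polynomial) and the diagonalizability step, both of which the paper leaves implicit.
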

\begin{proof}
First observe that $X$ has at least one multiple eigenvalue if and only if $\dim(\A(X))\leq n-1$. Now from the Corollary~\ref{cor:leq}, we have  $\dim(\c(X))\leq n$. If $\dim(\A(X))= n$, then we will get a contradiction from above corollary.
\end{proof}

 If $X$ is a pattern polynomial graph with odd number of vertices,
then from Corollary~\ref{cor:regular} we have stronger result than
above.

\begin{lemma}
If $X$ is a pattern polynomial graph with odd number of vertices, then $\dim(\A(X))\leq \frac{n+1}{2}$.
\end{lemma}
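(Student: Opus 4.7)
The plan is to exploit the fact that, because $X$ is pattern polynomial, the pattern matrices $\{I=P_1,P_2,\ldots,P_r\}$ form a basis of $\A(X)$, so $\dim(\A(X))=r$. Hence bounding $r$ from above bounds $\dim(\A(X))$.

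First I would invoke Corollary~\ref{cor:regular} to assert that for each $i\geq 2$ the matrix $P_i$ is the adjacency matrix of a $k_i$-regular graph $X_{P_i}$ on $n$ vertices, so in particular $k_i\geq 1$. Next I would use the parity constraint: in any $k$-regular graph on $n$ vertices the sum of degrees $kn$ must be even; since $n$ is odd, this forces each $k_i$ to be even, and therefore $k_i\geq 2$.

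Then I would use the defining identity $\sum_{i=1}^{r}P_i=J$ from Observation~\ref{obs:smallest}. Taking row sums and subtracting the contribution of $P_1=I$ gives
\[
\sum_{i=2}^{r}k_i \;=\; n-1.
\]
Combining this with $k_i\geq 2$ for each $i\geq 2$ yields $2(r-1)\leq n-1$, i.e.\ $r\leq \tfrac{n+1}{2}$. Since $\dim(\A(X))=r$, this gives the desired bound.

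The argument is essentially a counting argument modulo parity; there is no real obstacle, since all of the structural work (standard basis, each non-identity pattern matrix being regular, sum of patterns equals $J$) has already been established in the preceding results. The only delicate point is remembering to invoke the parity constraint $kn\equiv 0\pmod 2$ to upgrade $k_i\geq 1$ to $k_i\geq 2$, which is exactly what odd $n$ buys us over the general bound $\dim(\A(X))\leq n$ from Corollary~\ref{cor:leq}.
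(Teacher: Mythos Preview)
Your proof is correct and follows essentially the same route as the paper: both use Corollary~\ref{cor:regular} to see that each non-identity pattern matrix is the adjacency matrix of a regular graph on an odd number of vertices, invoke parity to force each such regularity to be at least $2$, and then count (using $\sum_i P_i=J$) to conclude there are at most $\frac{n-1}{2}$ non-identity pattern matrices. The paper states the argument more tersely, but the content is identical.
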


\begin{proof}
First observe that  $\dim(\A(X))$ is the number of pattern matrices. From Corollary~\ref{cor:regular}, all  pattern graphs of $X$ are regular with odd number of vertices. Consequently each  is an even regular graph with regularity $\geq 2$. Hence $X$ has atmost $\frac{n-1}{2}$ pattern graphs.
\end{proof}

A graph is said to be an {\em edge-regular} graph if any two of its
adjacent vertices have the same number of common neighbours. The following result is easy to see.

\begin{lemma}
If $X$ is a pattern polynomial graph and its adjacency matrix itself is a pattern matrix
then $X$ is an edge-regular graph.
\end{lemma}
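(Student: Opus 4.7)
The plan is to reduce edge-regularity of $X$ to the numerical assertion that $(A^2)_{uv}$ is the same constant for every edge $\{u,v\}$, since $(A^2)_{uv}$ counts the common neighbours of $u$ and $v$. Because $X$ is a pattern polynomial graph, $A^2 \in \A(X) = \Ll(X)$, so I can expand
\[
A^2 \;=\; \sum_{i=1}^{r} c_i P_i
\]
for some scalars $c_i \in \C$, where $P_1,\dots,P_r$ are the pattern matrices of $X$.

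Now I would invoke the hypothesis that $A$ is itself a pattern matrix, say $A = P_j$. By Observation~\ref{obs:smallest}(1), the $P_i$ are pairwise disjoint $0,1$-matrices with $\sum_i P_i = J$, so every entry of $J$ is a $1$ in exactly one $P_i$. Consequently, for any edge $\{u,v\}$ of $X$ we have $(P_j)_{uv}=1$ and $(P_i)_{uv}=0$ for all $i \neq j$. Substituting into the expansion above gives $(A^2)_{uv} = c_j$ for every edge $\{u,v\}$. Since $(A^2)_{uv}$ is the number of common neighbours of $u$ and $v$, this shows that $X$ is edge-regular with common-neighbour count equal to $c_j$.

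The only nontrivial step is the use of disjointness to pick out a single coefficient of the expansion on the support of $A$; there is no genuine obstacle. It is worth noting where this argument breaks down if the hypothesis is weakened: if $A$ is merely a sum of several pattern matrices rather than equal to a single $P_j$, then distinct edges of $X$ can lie in the supports of distinct $P_i$'s with distinct associated coefficients $c_i$, so $(A^2)_{uv}$ need not be constant across edges. This is precisely why the hypothesis that $A$ is itself a pattern matrix is essential.
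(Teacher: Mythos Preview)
Your argument is correct and is essentially the same as the paper's: expand $A^2$ in the pattern basis, use that $A$ is one of the $P_i$, and read off that coefficient via disjointness to conclude every adjacent pair has the same number of common neighbours. The only difference is notational (the paper writes $A^2=a_0I+a_1A+a_2P_2+\cdots$ and identifies the common-neighbour count as $a_1$).
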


\begin{proof}
Let $X$ be a pattern polynomial graph with adjacency matrix $A$. Let $\{P_0=I,P_1=A,P_2,\ldots, P_r\}$ be the standard basis of $\A(X)$. Hence $A^2=a_0I+a_1A+a_2P_2+\dots +a_rP_r$ where $a_i\in \C$. 
Consequently any two adjacent vertices have exactly $a_1$ common neighbours.
\end{proof}

In the following section, we see few classes of graphs which satisfy
the condition $\ell=r$. Consequently they are pattern polynomial graphs

\section{Some graph classes which are pattern polynomial} \label{sec:three}

In this section we will prove that the following classes of graphs are
pattern polynomial graphs a)orbit polynomial graphs b) distance
regular graphs hence distance transitive graphs c) connected compact
regular graphs.
\begin{definition}
Let $G$ be a subset of $n\times n$ permutation matrices forming a
group. Then $\V_{\C}(G)=\{A\in M_n(\C): PA=AP\; \forall P\in G\}$
forms an algebra over $\C$ called the {\em centralizer algebra} of
the group $G$.
\end{definition}

\begin{definition}
If G is a group acting on a set  $V$, then $G$ also acts on
$V\times V$ by $g(x,y)=(g(x),g(y))$. The orbits of G on $V\times
V$ are called {\em orbitals}. In the context of graphs, the
orbitals of graph $X$ are orbitals of its automorphism group
$\text{Aut}(X)$ acting on the vertex set of $X$.
That is, the orbitals are the orbits of the
arcs/non-arcs of the graph $X=(V,E)$. The number of orbitals is
called the {\em rank} of $X$.
\end{definition}

An orbital can be represented by a $0,1$-matrix $M$ where $M_{ij}$
is $1$ if $(i,j)$ belongs to the orbital. We can associate
directed graphs to these matrices. If the matrices are symmetric,
then these can be treated as undirected graphs.

\begin{obs}
\begin{itemize}
\item The `1' entries of any orbital matrix are either all on the diagonal or all are off diagonal.
\item The orbitals containing $1$'s on the diagonal will be called {\em diagonal} orbitals.
\end{itemize}
\end{obs}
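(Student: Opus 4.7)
The plan is to observe that the diagonal/off-diagonal dichotomy is invariant under the coordinate-wise action of $\text{Aut}(X)$ on $V\times V$, so the two types of pairs cannot mix within a single orbit.

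More concretely, I would fix an orbital $O$ and pick any representative pair $(i,j)\in O$. Every other element of $O$ has the form $(g(i),g(j))$ for some $g\in\text{Aut}(X)$. If $i=j$, then trivially $g(i)=g(j)$ and the pair $(g(i),g(j))$ lies on the diagonal; if $i\neq j$, then since $g$ is a bijection of $V$ we have $g(i)\neq g(j)$, so $(g(i),g(j))$ is off the diagonal. Thus every pair in $O$ has the same diagonal/off-diagonal status as $(i,j)$, which is exactly the claim about where the $1$'s in the orbital matrix $M$ can sit.

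The second bullet is merely terminological, naming the orbitals of the first type \emph{diagonal} orbitals, so it requires no proof. There is no real obstacle here: the argument uses nothing beyond the fact that a permutation sends the diagonal of $V\times V$ into itself and its complement into itself. I would keep the write-up to a couple of sentences, essentially the case split above.
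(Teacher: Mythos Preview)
Your argument is correct and is exactly the natural one: the diagonal $\{(v,v):v\in V\}$ is invariant under the coordinatewise action of $\text{Aut}(X)$ because permutations are injective, hence each orbit lies entirely inside the diagonal or entirely outside it. The paper states this as a bare observation with no proof, so your short case split is precisely what is implicitly being asserted; there is nothing to add.
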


\begin{definition}
The centralizer algebra of a graph $X$ denoted by $\V(X)$ is the centralizer algebra of its automorphism group acting on the vertex set of $X$.
\end{definition}

\begin{theorem}\cite{K:R:R:T}
$\V_(X)$ is a coherent algebra and orbitals of
$\text{Aut}(X)$ acting on the vertex set of $X$ form its unique 0-1 matrix basis.
\end{theorem}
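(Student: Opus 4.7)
The strategy is to reduce the commutation condition $PA=AP$ to a pointwise invariance condition on entries, and then read off both claims from there. For a permutation matrix $P(g)$ corresponding to $g\in\text{Aut}(X)$, a direct computation shows that $P(g)A=AP(g)$ holds if and only if $A_{ij}=A_{g(i),g(j)}$ for all $i,j$. Thus membership in $\V(X)$ is the same as being constant on each orbit of the diagonal action of $\text{Aut}(X)$ on $V\times V$, i.e., on each orbital. All the structural claims of the theorem will then fall out of this translation.

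For the coherent-algebra claim, I would verify the defining properties in turn. Linearity and closure under matrix product are the standard verification for any centralizer algebra: if $A,B$ each commute with $P(g)$, then so does $AB$. The identity $I\in\V(X)$ is trivial, and $J\in\V(X)$ follows because $P(g)J=J=JP(g)$ for every permutation matrix. Closure under Hadamard product is immediate from the entrywise criterion: if $A$ and $B$ are each constant on a given orbital $\alpha$, then so is $A\odot B$. Closure under conjugate transpose is obtained by taking $*$ of $P(g)A=AP(g)$ and using that $P(g)^{*}=P(g)^{-1}=P(g^{-1})$ is again in $\text{Aut}(X)$, so as $g$ ranges over the group the commutation relation is preserved.

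For the basis claim, let $M_\alpha$ denote the $0,1$-matrix supported exactly on the orbital $\alpha$. Each $M_\alpha$ lies in $\V(X)$ because $\alpha$ is stable under the diagonal action, so its characteristic function automatically satisfies the entrywise invariance criterion. The family $\{M_\alpha\}$ consists of mutually disjoint matrices (their supports partition $V\times V$), hence is linearly independent. To see it spans, take any $A\in\V(X)$; by the entrywise criterion $A$ is constant on every orbital $\alpha$ with some value $a_\alpha\in\C$, so $A=\sum_\alpha a_\alpha M_\alpha$. Uniqueness of the $0,1$-basis is then immediate from Theorem~\ref{thm:unique}.

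The main obstacle is essentially notational rather than conceptual: carrying out cleanly the passage between the matrix equation $P(g)A=AP(g)$ and the index-level invariance $A_{ij}=A_{g(i),g(j)}$, so that the four coherent-algebra axioms together with the basis statement all collapse to the single underlying fact about the diagonal action of $\text{Aut}(X)$ on $V\times V$. Once that translation is in place, no further work is required beyond bookkeeping.
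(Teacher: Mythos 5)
Your proof is correct. The paper does not prove this theorem at all---it is cited from Klin, R\"ucker, R\"ucker and Tinhofer---so there is no in-paper argument to compare against; your reduction of $P(g)A=AP(g)$ to the entrywise invariance $A_{ij}=A_{g(i),g(j)}$, from which the coherent-algebra axioms and the orbital basis both follow, is the standard proof of this fact, and your appeal to Theorem~\ref{thm:unique} correctly settles uniqueness of the $0,1$-basis.
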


Since $\text{Aut}(X)=\text{Aut}(X^c)$,  we have  $\V(X)=V(X^c)$. Also
$\c(X)$ is the smallest coherent algebra containing $A(X)$ and
$\V(X)$ is a coherent algebra of $X$ containing $A(X)$ so
$\c(X)\subseteq \V(X)$.

\paragraph{Orbit polynomial graphs}

\begin{definition}
A graph $X=(V,E)$ is {\em orbit polynomial} graph if each orbital matrix is a member of $\A(X)$. That is, each orbital matrix is a polynomial in $A$.
\end{definition}

\begin{lemma}
If $X$ is orbit polynomial graph if and only if $\A(X)=\V(X)$
\end{lemma}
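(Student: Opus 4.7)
The plan is to use the cited theorem stating that $\V(X)$ is a coherent algebra whose unique standard $0,1$-basis consists of the orbital matrices of $\text{Aut}(X)$ acting on $V(X)$. With that in hand, the equivalence reduces to two short containments.

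For the forward direction, assume $X$ is orbit polynomial, so every orbital matrix lies in $\A(X)$. Since the orbital matrices form a basis of $\V(X)$, taking linear combinations gives $\V(X) \subseteq \A(X)$. For the reverse containment, note that by Lemma~\ref{lem:aut} the adjacency matrix $A$ commutes with $P(g)$ for every $g \in \text{Aut}(X)$, so $A \in \V(X)$; because $\V(X)$ is an algebra it contains $\C[A] = \A(X)$. Combining the two inclusions yields $\A(X) = \V(X)$.

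For the reverse direction, assume $\A(X) = \V(X)$. Each orbital matrix belongs to $\V(X)$ (it is in the standard basis), and therefore to $\A(X)$, i.e.\ each orbital matrix is a polynomial in $A$. This is exactly the definition of an orbit polynomial graph.

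There is essentially no obstacle: the main content is already packaged in the cited theorem identifying the orbital matrices as the standard basis of $\V(X)$, together with the elementary observation $A \in \V(X)$ coming from Lemma~\ref{lem:aut}. The only care needed is to remember that $\V(X)$ is an algebra (not just a vector space) so that $\A(X) = \C[A] \subseteq \V(X)$ follows automatically from $A \in \V(X)$; otherwise one would have to separately verify that all powers $A^k$ commute with each $P(g)$, which of course they do.
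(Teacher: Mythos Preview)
Your argument is correct. The paper actually states this lemma without proof, treating it as an immediate consequence of the cited theorem that the orbital matrices form the standard $0,1$-basis of $\V(X)$; your write-up supplies exactly the routine details one would expect, namely the two containments $\V(X)\subseteq\A(X)$ (from the basis) and $\A(X)\subseteq\V(X)$ (from $A\in\V(X)$ via Lemma~\ref{lem:aut} and $\V(X)$ being an algebra).
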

If $X$ is a orbit polynomial graph, then $\A(X)=\c(X)=\V(X)$. Hence
every orbit polynomial graph is a pattern polynomial graph.

If $X$ is an orbit polynomial graph and $X^c$ is connected, then
 from above lemma we have
$\A(X)=\A(X^c)=\c(X)=CC(X^c)=\V(X)=V(X^c)$. So we have the
following result.

\begin{cor}
If $X$ is an orbit polynomial graph and $X^c$ is connected then
$X^c$ is also a  orbit polynomial graph.
\end{cor}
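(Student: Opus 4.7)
The plan is to prove the corollary by chaining together three equalities that have already been established in the paper, so the argument reduces to bookkeeping rather than any new construction. The target equation is $\A(X^c)=\V(X^c)$, which is the definition of $X^c$ being orbit polynomial, and both sides can be identified with things on the $X$ side.

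First I would unpack the hypothesis. Since $X$ is orbit polynomial, by the preceding lemma we have $\A(X)=\V(X)$. In particular, because $\V(X)$ is a coherent algebra containing $A$ and $\c(X)$ is the smallest such, we automatically get $\A(X)=\c(X)=\V(X)$, so $X$ is a pattern polynomial graph. This is the bridge that lets me invoke the earlier results about pattern polynomial graphs.

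Next I would bring in the two symmetry facts. On the coherent-closure side, Corollary~\ref{cor:comp} applied to the pattern polynomial graph $X$ (using the hypothesis that $X^c$ is connected) gives $\A(X)=\A(X^c)$. On the centralizer side, since $\text{Aut}(X)=\text{Aut}(X^c)$, the centralizer algebras coincide: $\V(X)=\V(X^c)$. Combining,
\[
\A(X^c)=\A(X)=\V(X)=\V(X^c),
\]
which by the orbit polynomial characterization means $X^c$ is orbit polynomial.

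There is no serious obstacle here; the only thing to be careful about is the applicability of Corollary~\ref{cor:comp}, which needs $X$ itself to be pattern polynomial and $X^c$ to be connected. The first is immediate from $\A(X)=\V(X)\supseteq\c(X)\supseteq\A(X)$, and the second is the hypothesis. (Connectedness of $X$ itself is automatic, since a pattern polynomial graph is connected regular by the earlier lemma.) So the proof is essentially a one-line chain of equalities.
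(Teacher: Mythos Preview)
Your proof is correct and follows essentially the same route as the paper: deduce that $X$ is pattern polynomial from $\A(X)=\V(X)$, invoke Corollary~\ref{cor:comp} (using connectedness of $X^c$) to get $\A(X)=\A(X^c)$, use $\text{Aut}(X)=\text{Aut}(X^c)$ to get $\V(X)=\V(X^c)$, and chain the equalities. The paper's argument is exactly this one-line chain $\A(X)=\A(X^c)=\c(X)=\c(X^c)=\V(X)=\V(X^c)$.
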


For any graph $X$, we have $\A(X)\subseteq \Ll(X)\subseteq
\c(X)\subseteq \V(X)$. Consequently from Corollary~\ref{cor:sb}
every pattern matrix is the sum of one or more orbital matrices. Further
by definition,  orbital matrices commute with all automorphisms of
$X$ hence we have the following result.

\begin{lemma}
Let $X$ be any graph and $\{P_1,P_2,\ldots, P_r\}$ be the set of all
pattern matrices of adjacency matrix of $X$. Then $\text{Aut}(X)
\subseteq \text{Aut}(X_{P_i})\; 1\leq i\leq r $.
\end{lemma}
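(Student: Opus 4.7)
The plan is to exploit the inclusion chain $\A(X) \subseteq \Ll(X) \subseteq \c(X) \subseteq \V(X)$ established just above the statement, together with the defining property of the centralizer algebra $\V(X)$: every matrix in $\V(X)$ commutes with $P(g)$ for each $g \in \text{Aut}(X)$. By Lemma~\ref{lem:aut}, this commutation relation applied to a $0,1$-matrix is exactly the condition that $g$ be an automorphism of the graph it represents, so reducing to commutation is the whole game.

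Concretely, I would fix $g \in \text{Aut}(X)$ and an index $i$ and aim to prove $P(g)P_i = P_i P(g)$. First I would observe that $P_i$ is a $0,1$-matrix lying in $\Ll(X)$, hence in $\V(X)$. Since $\V(X)$ is a coherent algebra whose unique standard basis consists of the orbital matrices of $\text{Aut}(X)$ (by the cited theorem of Koeberlein--Radosavljevic--Rudvalis--Tsaranov and the comment following it), Corollary~\ref{cor:sb} applied inside $\V(X)$ tells us that $P_i$ is a sum of orbital matrices of $\text{Aut}(X)$.

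Next I would invoke the fact that each orbital matrix $M$ commutes with every $P(g)$, $g \in \text{Aut}(X)$. This is essentially built into the definition of $\V(X)$: an orbital is by construction preserved setwise under the diagonal action of $\text{Aut}(X)$ on $V \times V$, which is equivalent to $P(g)M = MP(g)$. Summing over the orbital matrices appearing in the decomposition of $P_i$ yields $P(g)P_i = P_iP(g)$, and Lemma~\ref{lem:aut} then gives $g \in \text{Aut}(X_{P_i})$, as required.

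The argument is essentially bookkeeping; I do not expect a genuine obstacle. The only point that warrants care is that Corollary~\ref{cor:sb} must be invoked in the coherent algebra $\V(X)$ rather than in $\c(X)$, since $P_i$ need not itself be expressible as a $0,1$-sum within $\c(X)$ when $X$ is not a pattern polynomial graph; what makes the passage work is precisely the containment $\Ll(X) \subseteq \V(X)$, which places the $0,1$-matrix $P_i$ inside the larger coherent algebra whose standard basis has the desired automorphism-commuting property.
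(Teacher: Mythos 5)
Your proof is correct and follows essentially the same route as the paper: the paper likewise derives the lemma from the chain $\A(X)\subseteq\Ll(X)\subseteq\c(X)\subseteq\V(X)$, applies Corollary~\ref{cor:sb} inside $\V(X)$ to write each pattern matrix as a sum of orbital matrices, and concludes via the commutation of orbital matrices with $\text{Aut}(X)$ and Lemma~\ref{lem:aut}. (One small slip: the reference \cite{K:R:R:T} is to Klin, R\"ucker, R\"ucker and Tinhofer, not the authors you named, but this does not affect the argument.)
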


Every connected vertex transitive graph of prime order is an orbit polynomial graph, see [Beezer~\cite{R:B1}]. Following lemma gives a stronger result.

\begin{lemma}
If $X$ is a connected graph of prime order, then $X$ is orbit polynomial graph if and only if $\V(X)$ is commutative.
\end{lemma}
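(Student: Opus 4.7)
The plan is to dispatch the two directions separately. The forward implication is immediate and does not need the primality hypothesis: if $X$ is orbit polynomial, then by the preceding lemma $\A(X)=\V(X)$, and $\A(X)=\C[A]$ is commutative because it is generated by the single matrix $A$. Hence $\V(X)$ is commutative.

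For the converse I would first show that commutativity of $\V(X)$ forces $X$ to be vertex transitive; once this is in hand, Beezer's result cited just above the lemma (every connected vertex transitive graph of prime order is orbit polynomial) completes the proof. To establish vertex transitivity, I would reuse the argument of Corollary~\ref{cor:leq}: because $J\in\V(X)$ commutes with every element, the identity $MJ=JM$ forces each $M\in\V(X)$ to have constant row sum (equal to the constant column sum). Applying this to the standard 0-1 basis of $\V(X)$, every orbital matrix has constant row sum.

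Now focus on the diagonal orbitals, i.e.\ the 0-1 diagonal matrices whose supports encode the orbits of $\text{Aut}(X)$ on $V$. A diagonal 0-1 matrix has row sum $1$ on rows indexed by its support and $0$ elsewhere, so constancy of row sums forces such a matrix to be either $0$ or $I$. Since the diagonal orbitals are nonzero and sum to $I$, there can be exactly one of them, namely $I$ itself; equivalently $\text{Aut}(X)$ has a single orbit on $V$, so $X$ is vertex transitive. Beezer's theorem then closes the argument. The main obstacle is conceptual rather than technical: the right way to exploit commutativity of $\V(X)$ is to read it back through the constant-row-sum condition on diagonal orbitals. Interestingly, the prime-order hypothesis never enters the vertex-transitivity step; it is used only in the black-box appeal to Beezer's theorem, which is the point at which the primality of $n$ is actually needed.
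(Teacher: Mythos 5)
Your proposal is correct and follows essentially the same route as the paper: forward direction via $\A(X)=\V(X)$ being commutative, and converse by showing commutativity forces the identity to be the unique diagonal orbital (hence vertex transitivity) and then invoking Beezer's theorem for prime order. The only difference is that you spell out the constant-row-sum argument that the paper leaves implicit in its assertion that ``the identity matrix is in the standard basis of $\V(X)$.''
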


\begin{proof}
If $X$ is an orbit polynomial graph then clearly $\V(X)$ is
commutative. Conversely suppose that $\V(X)$ is commutative, then
the identity matrix  is in the standard basis of $\V(X)$.
Consequently  $X$ is a vertex transitive graph, hence the result
follows.
\end{proof}

An easy consequence of this lemma is that if $X$ is a connected graph of prime order and $\V(X)$ is commutative, then $X$ is a pattern polynomial graph.

\paragraph{Distance transitive graphs}

\begin{definition}
A graph $X$ is {\emph{distance
transitive}} if for all vertices $u,v,x,y$ of $X$ such that $d(u,v)=d(x,y)$
then there is a $g$ in $\text{Aut}(X)$ satisfying $g(u)=x$ and $g(v)=y$.
\end{definition}

\begin{rem}\label{rem:orb}
From the definition of distance transitivity following facts are
immediate: a) For a distance transitive graph with diameter $d$,
distance matrices and orbital matrices coincide, consequently its
rank is $d+1$. b) From Equation~\ref{eq:ell} and the fact that $\A(X)\subseteq \V(X)$, if $X$ is a distance transitive graph with
diameter $d$, then dimension of $\A(X)$ is $d+1$. Further orbital
matrices form a basis for $\A(X)$. This also implies the following lemma.
\end{rem}

Now the following result is immediate from the Theorem~\ref{thm:dis} and the
above Remark.

\begin{lemma}\label{lem:dis}
Every distance transitive graph is an orbit polynomial graph.
\end{lemma}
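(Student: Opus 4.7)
The plan is a short dimension argument: distance transitivity collapses the orbital structure of $\mathrm{Aut}(X)$ onto the distance structure, which pins $\dim(\V(X))$ to the same value that Biggs' inequality forces on $\dim(\A(X))$.

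First I would spell out Remark~\ref{rem:orb}(a) carefully. Let $X$ be distance transitive with diameter $d$. Since every automorphism preserves the graph distance, each orbital of $\mathrm{Aut}(X)$ on $V\times V$ is contained in a single distance class $\{(u,v):d(u,v)=k\}$. The definition of distance transitivity provides the reverse inclusion: any two pairs at the same distance lie in a common orbit. So the orbitals are exactly the $d+1$ distance classes for $k=0,1,\dots,d$, and their $0,1$-indicator matrices are the distance matrices $A_0=I,A_1=A,\dots,A_d$. By the theorem preceding the definition of $\V(X)$, these are the standard basis of $\V(X)$, so $\dim(\V(X))=d+1$.

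Next I would pair this with the general inclusion $\A(X)\subseteq\V(X)$, which follows because $A$ commutes with every $P(g)$, $g\in\mathrm{Aut}(X)$, by Lemma~\ref{lem:aut}, and hence so does every polynomial in $A$. Combined with the lower bound in Equation~(\ref{eq:ell}), which applies since $X$ is connected of diameter $d$, I obtain
\[
d+1 \;\le\; \dim(\A(X)) \;\le\; \dim(\V(X)) \;=\; d+1,
\]
forcing $\A(X)=\V(X)$. That equality is, by definition, the statement that $X$ is an orbit polynomial graph.

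There is no real obstacle. The only step with any content is the identification of orbitals with distance classes, and that is an immediate translation of the two halves of the definition of distance transitivity. Everything after it is bookkeeping via a chain of dimension inequalities already available in the text.
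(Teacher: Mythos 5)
Your proof is correct and follows essentially the same route as the paper: Remark~\ref{rem:orb} already records both the identification of orbitals with distance classes and the resulting dimension sandwich $d+1\le\dim(\A(X))\le\dim(\V(X))=d+1$ via Equation~(\ref{eq:ell}), which forces $\A(X)=\V(X)$. The paper additionally cites Damerell's Theorem~\ref{thm:dis} at this point, but your argument does not need it and is consistent with what the text actually does.
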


Converse of the above lemma is not true, as every  vertex transitive
graph of prime order is not a distance transitive graph.
so we have \\
Distance transitive graph $\Rightarrow$ Orbit polynomial graph
$\Rightarrow$ Pattern polynomial graph $\Rightarrow$ Distance
polynomial graph.

\paragraph{Compact graphs}

\begin{definition}
A graph $X$ is said to be  {\emph{compact}} if every doubly stochastic matrix which commutes with $A(X)$ is a convex combination of matrices from $\text{Aut}(X)$.
\end{definition}

A permutation group on a set X is generously transitive if, given any two
points, there is a permutation which interchanges them.

\begin{theorem} \label{lemma:compact}\cite{G:D1}
Let $X$be a connected regular graph with $r$
distinct eigenvalues. If $X$is compact, then $\text{Aut}(X)$ is a generously transitive permutation group with rank $r$.
\end{theorem}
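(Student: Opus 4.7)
Since $X$ is connected and regular, Lemma~\ref{lem:J} gives $J\in\A(X)$, and because $A$ is symmetric (hence diagonalizable with $r$ distinct eigenvalues), $\dim\A(X)=r$. My plan is to reduce the theorem to the single algebraic identity $\A(X)=\V(X)$. Once this is in hand, $\dim\V(X)=r$ is exactly the rank statement. Moreover, every matrix in $\A(X)$ is symmetric (being a polynomial in the symmetric matrix $A$), so the standard basis of $\V(X)=\A(X)$ consists of symmetric $0,1$-matrices, i.e.\ every orbital of $\text{Aut}(X)$ is self-paired, which is equivalent to $\text{Aut}(X)$ being generously transitive.

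The route I take is through commutants. Let $C(A):=\{M\in M_n(\C):MA=AM\}$ and let $\C[\text{Aut}(X)]$ denote the linear span of $\{P(g):g\in\text{Aut}(X)\}$ inside $M_n(\C)$. By the standard double commutant identities for the $\ast$-subalgebras $\A(X)=\C[A]$ and $\C[\text{Aut}(X)]$ (both of which are unital and self-adjoint), one has $\A(X)=\V(X)$ if and only if $C(A)=\C[\text{Aut}(X)]$. The inclusion $\C[\text{Aut}(X)]\subseteq C(A)$ is immediate from Lemma~\ref{lem:aut}, so the whole task reduces to extracting $C(A)\subseteq\C[\text{Aut}(X)]$ from the compactness hypothesis.

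The key perturbation step is the following: given a real $N\in C(A)$ with zero row and column sums, the matrix $M_\varepsilon:=\tfrac{1}{n}J+\varepsilon N$ is, for all sufficiently small $\varepsilon>0$, doubly stochastic (nonnegativity holds for small $\varepsilon$, and both row and column sums equal $1$ because $N\on=N^{T}\on=0$) and still commutes with $A$ (because $J\in C(A)$ by regularity of $X$). Compactness therefore places both $M_\varepsilon$ and $\tfrac{1}{n}J$ in $\text{conv}(\text{Aut}(X))\subseteq\C[\text{Aut}(X)]$, and subtracting the two yields $\varepsilon N\in\C[\text{Aut}(X)]$, hence $N\in\C[\text{Aut}(X)]$.

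To upgrade this to an arbitrary $N\in C(A)$, I would observe that $A\on=k\on$ together with connectedness of $X$ makes the $k$-eigenspace one-dimensional, so $N\on=c\on$ and $N^{T}\on=c\on$ for a common scalar $c$, i.e.\ $N$ has constant row and column sums. Then $N-(c/n)J\in C(A)$ has zero row and column sums; applying the perturbation step to its real and imaginary parts and using $J\in\C[\text{Aut}(X)]$ (from the same compactness argument applied to $\tfrac{1}{n}J$ itself) gives $N\in\C[\text{Aut}(X)]$. The hardest part is the initial reformulation: recognizing that the convex-geometric compactness hypothesis can be squeezed, via the above perturbation trick, into the clean algebraic identity $C(A)=\C[\text{Aut}(X)]$, after which double commutant produces $\A(X)=\V(X)$ and both conclusions of the theorem fall out simultaneously.
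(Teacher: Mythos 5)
The paper does not prove this statement: it is quoted directly from Godsil~\cite{G:D1} and used as a black box, so there is no internal proof to compare yours against. On its own merits your argument is correct and essentially a reconstruction of the standard proof. The double-commutant reduction of $\A(X)=\V(X)$ to $C(A)=\C[\mathrm{Aut}(X)]$ is legitimate because both $\C[A]$ and $\C[\mathrm{Aut}(X)]$ are unital, $\ast$-closed subalgebras of $M_n(\C)$; the perturbation $\tfrac1n J+\varepsilon N$ is exactly the right device for converting the convex-geometric compactness hypothesis into the linear inclusion $C(A)\subseteq\C[\mathrm{Aut}(X)]$; and your reduction of a general $N\in C(A)$ to one with zero row and column sums correctly uses connectedness (simplicity of the Perron eigenvalue $k$) to get $N\on=c\on$ and the total-sum count $\on^{T}N\on$ to see that the row-sum and column-sum constants agree. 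The two conclusions then drop out as you say: $\mathrm{rank}=\dim\V(X)=\dim\A(X)=r$ because $A$ is symmetric, hence diagonalizable, so $\dim\C[A]$ equals the number of distinct eigenvalues; and since every matrix in $\A(X)=\V(X)$ is symmetric, every orbital matrix (these form the standard basis of $\V(X)$ by the cited theorem of Klin et al.) is symmetric, i.e.\ every orbital is self-paired, which for distinct points $x,y$ produces an automorphism interchanging them. Two small points deserve explicit mention in a final write-up: $\tfrac1n J$ commutes with $A$ only because $X$ is regular (this is where that hypothesis enters), and your proof in fact yields the stronger statement $\A(X)=\V(X)$, i.e.\ that $X$ is orbit polynomial, which is precisely the form in which the paper exploits the theorem in the corollary that follows it.
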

In a compact connected regular graph $X$ the number of distinct eigenvalues of $A(X)$ is same as the number of orbitals. It is also the dimension of $\A(X)$. Hence we have the following corollary from the fact that $\A(X)\subseteq \V(X)$.
\begin{cor}
Every compact connected regular graph is orbit polynomial graph.
\end{cor}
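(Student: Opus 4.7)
The plan is to establish the equality $\mathcal{A}(X) = \mathcal{V}(X)$ by a dimension count, and then invoke the characterization lemma stating that $X$ is orbit polynomial iff $\mathcal{A}(X)=\mathcal{V}(X)$.

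First, since $A(X)$ is a real symmetric matrix it is diagonalizable, so by Lemma~\ref{lem:aa3} the dimension of $\mathcal{A}(X)$ over $\mathbb{C}$ is precisely the number $r$ of distinct eigenvalues of $A(X)$. Second, $\mathcal{V}(X)$ is a coherent algebra whose unique $0,1$-matrix basis consists of the orbital matrices of $\text{Aut}(X)$, so $\dim(\mathcal{V}(X))$ equals the rank of $\text{Aut}(X)$. Third, the hypothesis that $X$ is compact together with Theorem~\ref{lemma:compact} tells us that the rank of $\text{Aut}(X)$ is exactly $r$. Combining these three observations gives $\dim(\mathcal{A}(X)) = \dim(\mathcal{V}(X))$.

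Next I would use the chain of inclusions $\mathcal{A}(X) \subseteq \mathcal{V}(X)$ (which was noted earlier in the section and follows because any polynomial in $A$ commutes with every permutation matrix that commutes with $A$). Since the two subspaces have the same finite dimension, the inclusion forces equality $\mathcal{A}(X) = \mathcal{V}(X)$. Finally, I would apply the earlier characterization lemma to conclude that $X$ is orbit polynomial.

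The argument is essentially a one-line dimension count once Theorem~\ref{lemma:compact} is available, so there is no real obstacle; the only subtle point is ensuring that the count of distinct eigenvalues and the rank of $\text{Aut}(X)$ agree, which is exactly the content of the cited compactness theorem and must be invoked explicitly.
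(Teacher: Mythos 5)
Your proof is correct and follows essentially the same route as the paper: the paper also deduces the corollary by noting that compactness (via Theorem~\ref{lemma:compact}) makes the number of orbitals equal to the number of distinct eigenvalues, which is $\dim(\A(X))$, and then uses $\A(X)\subseteq \V(X)$ to force equality. Your write-up just makes the dimension count and the appeal to diagonalizability more explicit than the paper's one-sentence justification.
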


Godsil~\cite{G:D1} showed that if $n\geq 7$, then the line graph of the complete graph $K_n$, is a distance transitive graph  but not compact graph.  It is also easy to check that if $X$ is compact, then so is its compliment $X^c$ . Hence if $X$ is connected compact regular graph and $X^c$ is also connected, then $X^c$ is compact connected regular graph but it need not be a distance transitive graph $X=C_6$(the cycle graph) is such an example.

Now we will see class of graphs which are pattern polynomial graphs but  need not be orbit polynomials graphs.

\paragraph{Distance regular Graphs}

\begin{definition}
A connected graph is distance regular if for any two vertices $u$ and $v$, the
number of vertices at distance $i$ from $u$ and $j$ from $v$ depends only on $i$, $j$, and
the distance between $u$ and $v$. These graphs are necessarily regular, since $u$
may be equal to $v$.
\end{definition}

It is easy to see that every distance transitive graph is distance
regular. In fact, there are many distance regular graphs whose automorphism
group is trivial \cite{spence}. The following theorem establishes that  every distance
regular graph is a  pattern polynomial graph.

\begin{theorem}\label{thm:dis}[Damerell~\cite{dam}]
Let $X$ be a distance regular graph with diameter $d$.  Then
$\{A_0,A_1,\dots ,A_d\}$ is a basis for the adjacency algebra
$\A(X)$, and consequently the dimension of $\A(X)$ is $d+1$.
\end{theorem}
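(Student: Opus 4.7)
The plan is to prove by induction that each distance matrix $A_i$ is a polynomial of degree $i$ in $A$, and then deduce that $\{A_0,\ldots,A_d\}$ is a basis of $\A(X)$ via a matching dimension count. The engine of the argument is a three-term recurrence $A\cdot A_i=\alpha_i A_{i-1}+\beta_i A_i+\gamma_i A_{i+1}$ that falls straight out of distance-regularity.

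Linear independence of the $A_i$ is immediate: since every ordered pair of vertices has a unique distance, $A_0,A_1,\ldots,A_d$ are pairwise disjoint nonzero $0,1$-matrices. The induction starts from $A_0=I$ and $A_1=A$, both in $\A(X)$. For the inductive step I would compute the $(u,w)$-entry of $A\cdot A_i$: it counts the vertices $v$ with $u\sim v$ and $d(v,w)=i$. By the definition of distance-regularity this count depends only on $d(u,w)$, and the triangle inequality forces $d(u,w)\in\{i-1,i,i+1\}$ whenever such a $v$ exists. Hence $A\cdot A_i=\alpha_i A_{i-1}+\beta_i A_i+\gamma_i A_{i+1}$ for scalar intersection numbers $\alpha_i,\beta_i,\gamma_i$. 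Crucially $\gamma_i>0$ for $i<d$: if $d(u,w)=i+1$, the neighbor of $u$ on any shortest $u$--$w$ path is a witness. Solving the recurrence for $A_{i+1}$ then realises it as a polynomial of degree $i+1$ in $A$, so every $A_i\in\A(X)$ and $\dim\A(X)\ge d+1$.

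For the matching upper bound I would apply the recurrence at $i=d$, where the $A_{d+1}$ term vanishes since no vertices sit at distance $d+1$, giving $A\cdot A_d=\alpha_d A_{d-1}+\beta_d A_d$. Writing $A_i=p_i(A)$ with $\deg p_i=i$, the triangular change of basis yields $\mathrm{span}\{I,A,\ldots,A^d\}=\mathrm{span}\{A_0,\ldots,A_d\}$, so the identity above exhibits $A^{d+1}$ as a linear combination of $I,A,\ldots,A^d$. Hence the minimal polynomial of $A$ has degree at most $d+1$, giving $\dim\A(X)\le d+1$. Combined with the lower bound in (\ref{eq:ell}) this yields equality and the stated basis. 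The main obstacle I anticipate is deriving the three-term recurrence cleanly from the distance-regularity axiom and verifying that the coefficient $\gamma_i$ is a well-defined positive constant, so that the recurrence inverts forward through every $i<d$.
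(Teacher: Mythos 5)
Your proposal is correct. Note that the paper does not prove this statement at all --- it is quoted as a known result with a citation to Damerell --- so there is no internal proof to compare against; what you have written is the standard argument (as in Biggs or Brouwer--Cohen--Neumaier): disjointness of the nonzero $0,1$-matrices $A_0,\dots,A_d$ gives independence, the three-term recurrence $AA_i=\alpha_iA_{i-1}+\beta_iA_i+\gamma_iA_{i+1}$ with $\gamma_i>0$ for $i<d$ realises each $A_i$ as a degree-$i$ polynomial in $A$, and the truncated recurrence at $i=d$ bounds the degree of the minimal polynomial by $d+1$. All the steps you flag as potential obstacles (well-definedness of the intersection numbers, positivity of $\gamma_i$, the triangular change of basis) go through exactly as you describe.
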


\begin{cor}\label{cor:A=CC}
Every distance regular graph is a pattern polynomial graph.
\end{cor}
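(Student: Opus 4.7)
The plan is to show directly that for a distance regular graph $X$ with diameter $d$, the adjacency algebra $\A(X)$ is itself a coherent algebra; then, since $\c(X)$ is by definition the \emph{smallest} coherent algebra containing $A$, we automatically get $\c(X) \subseteq \A(X)$, and combined with the universal inclusion $\A(X) \subseteq \c(X)$ from Observation~\ref{obs:smallest}, this yields $\A(X) = \c(X)$, which is the pattern polynomial condition by Lemma~\ref{lem:pp}.

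To check that $\A(X)$ is coherent, I would invoke Theorem~\ref{thm:dis} to get the basis $\{A_0, A_1, \ldots, A_d\}$ of $\A(X)$ consisting of the distance matrices. I would then verify the three defining properties in turn. First, $I = A_0 \in \A(X)$, and $J = A_0 + A_1 + \cdots + A_d \in \A(X)$ from the identity $A_0 + A_1 + \cdots + A_d = J$ recorded before Observation~\ref{obs:dp}. Second, since $A$ is a real symmetric matrix, every element of $\A(X) = \C[A]$ is a polynomial in $A$ with complex coefficients, and conjugate transposition acts on such an element by conjugating its coefficients, so $\A(X)$ is closed under conjugate transposition.

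The key step is closure under Hadamard product. Here I would exploit the fact that the basis matrices $A_0, A_1, \ldots, A_d$ are \emph{pairwise disjoint} $0,1$-matrices (each entry $(r,s)$ lies in exactly one distance class), so $A_i \odot A_j = \delta_{ij} A_i$. Consequently, for any two elements $M = \sum_{i=0}^d a_i A_i$ and $N = \sum_{i=0}^d b_i A_i$ of $\A(X)$, the Hadamard product $M \odot N = \sum_{i=0}^d a_i b_i A_i$ again lies in $\A(X)$. This is the only step that genuinely requires the distance regularity (via Theorem~\ref{thm:dis}); the other two properties hold for every connected regular graph.

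With coherence of $\A(X)$ established, minimality of $\c(X)$ gives $\c(X) \subseteq \A(X)$, and the conclusion follows. I don't anticipate any real obstacle: the heavy lifting is done by Theorem~\ref{thm:dis}, which already provides a basis with exactly the structural property (mutual Hadamard disjointness) needed to make $\A(X)$ closed under Hadamard multiplication. Note that the intermediate facts, such as Lemma~\ref{lem:dp} showing $A_k \in \Ll(X)$, are not strictly needed here, although they offer an alternative route via comparing the two disjoint $0,1$-partitions $\{A_k\}$ and $\{P_j\}$ of $J$ and forcing $r = d+1$.
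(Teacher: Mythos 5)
Your proof is correct and follows essentially the route the paper intends: the paper states this as an immediate consequence of Theorem~\ref{thm:dis} without writing out the details, and the implicit argument is exactly yours --- the mutually disjoint $0,1$-basis $\{A_0,\ldots,A_d\}$ makes $\A(X)$ Hadamard-closed (and it contains $I$, $J$ and is closed under conjugate transposition), so $\A(X)$ is itself coherent and minimality of $\c(X)$ forces $\A(X)=\c(X)$. The only cosmetic difference is that the paper's own machinery would phrase this as $\Ll(X)=\A(X)$, i.e.\ $\ell=r$, before invoking the criterion of Section~\ref{sec:one}, which amounts to the same minimality argument.
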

Observe that if $X$ is a distance regular graph then $A$ itself is a pattern matrix. Consequently if $X$ is a distance regular graph with diameter $\geq 3$, then $X^c$ is not distance regular. Now if $X$ is
a distance regular graph and $X^c$ is connected then  from Corollary~\ref{cor:comp}
$\A(X)=\A(X^c)=\c(X)=\c(X^c)$. Hence $X^c$ is also  pattern  polynomial
graph but it need not be a distance regular graph for example $C_6^c$. Further there are
distance regular graphs whose automorphism group is trivial \cite{spence}. So they
can't be orbit polynomial graphs. Finally, if $X$ is a distance
regular graph with diameter $\geq 3$ with trivial automorphism group
and $X^c$ is connected, then $X^c$ is neither distance regular nor
orbit polynomial graph, but it is a pattern polynomial graph.

The following  diagram gives the relationship among some of the
graph classes which we studied in this work.
\vspace*{10mm}

 \begin{figure}[tbh]

\unitlength=0.75mm
 \begin{picture}(0,0)
 \put(35,-15){\oval(60,60)}
\put(63,11){\vector(1,0){25}}
\put(90,11){\small{\textbf{Regular connected  Graphs}}}

\put(35,-15){\oval(50,50)}
\put(59,4){\vector(1,0){10}}
\put(69,4){\small{Distance polynomial graphs}}

\put(35,-15){\oval(45,45)}
\put(57,-1){\vector(1,0){45}}
\put(102,-1){\small{\textbf{Pattern polynomial graphs}}}

\put(35,-15){\oval(20,20)}
\put(41,-7){\vector(1,0){40}}
\put(81,-7){\small{Orbit polynomial graphs}}

\put(33,-20){\oval(10,10)}
\put(42,-15){\vector(1,0){45}}
\put(87,-15){\small{\textbf{Compact  connected regular graphs}}}

\put(37,-17){\oval(10,10)}
\put(38,-20){\vector(1,0){30}}
\put(68,-20){\small{Distance transitive graphs}}

\put(26,-25){\oval(27,22)}
\put(39,-28){\vector(1,0){60}}
\put(99,-28){\small{\textbf{Distance regular graphs}}}

\end{picture}
\label{fig:1}
\end{figure}
\vspace*{25mm}

\section{PBIBD(t)s from pattern
polynomial graphs}\label{sec:four}

A design is an ordered pair $(V,\mathcal{B})$ with point set $V$ and
set of blocks $\mathcal{B}$ such that $\mathcal{B}$ is a collection
of subsets of $V$.

A design  $(V,\mathcal{B})$ is called $t-(v,k,\la)$ design(some times only t-design) if $|V|=v, |B|=k\;\forall 
B\in \mathcal{B}$ and each subset of $V$ of cardinality $t$ is contained in $\la$ blocks.
One can show by counting that a $t$-design is an $i$-design for each
$0\leq i\leq t$. In fact a $t-(v,k,\la)$ design is a
$i-(v,k,\la_i)$ design with $\la_i=\la {v-i\choose i-1}/{k-i\choose
t-i}$ for each $0\leq i\leq t$.

A \textit{balanced incomplete block design}(BIBD) is a 2-design. The
parameters $\la_1$ and $\la_0$ are usually denoted by $r_1$
(replication number) and $b$ (number of blocks).

\begin{definition}
Given $v$symbols $1,2,\ldots,v$, a relation satisfying the
following condition is said to be an {\emph{symmetric association scheme}} with $m$
association classes:
\begin{enumerate}
\item Any two symbols $\alpha$ and $\beta$ are either
first,second,.. or mth associates and this relationship is
symmetrical. We denote $(\alpha,\beta)=i$, when $\alpha$ and
$\beta$ ith associates.
\item Every symbol $\alpha$ has $n_i$, $i$th associates, the
number $n_i$ being independent of $\alpha$.
\item If $(\alpha,\beta)=i$ the number of symbols $\gamma$ that
satisfy simultaneously $(\alpha,\gamma)=j$ and $(\beta,\gamma)=j'$
is $p^i_{jj'}$ and this number is independent of $\alpha$ and
$\beta$. Further $p^i_{jj'}=p^i_{j'j}$
\end{enumerate}
\end{definition}
The numbers $v, n_i,p^i_{jj'}$ are called the parameters of the
association scheme. If the relations are not symmetric, then it is
called an association scheme.

Let $R_i=\{(\alpha,\beta)|(\alpha,\beta)=i\}$ be the set of all
$i$-th associates. Then the relation $R_i$ of an  association scheme
can be described by a $0,1$-matrices $A_i$. Hence above definition
can be described in terms of  matrices as follows.

An association scheme with $d$ associate classes is a set
$\mathfrak{A}=\{A_0,\ldots, A_d\}$ of 0,1-matrices such that
\begin{enumerate}
 \item $A_0=I$.
\item $A_0+A_1+\dots +A_d=J$.
\item $A_i^T\in \mathfrak{A}$.
\item $A_iA_j=A_jA_i\in span(\mathfrak{A})$.
\end{enumerate}

If $A_i^T=A_i(1\leq i\leq d)$, then $\mathfrak{A}$ is a symmetric
association scheme also called Bose-Mesner algebra. For example, if
$X$ is a pattern polynomial graph, then $\A(X)$ is a Bose-Mesner
algebra. But every Bose-Mesner algebra can not be obtained in this
way. Now we will give an example of a Bose-Mesner algebra
$\mathfrak{A}$ which is not equal to adjacency algebra of any graph.

Let $G$ be a finite abelian group of order $n(>2)$. Each element of
$G$ gives rise to a permutation of $G$, the permutation corresponding
to '$a$ maps g in $G$ to $ga$'. Hence for each element $g$ in $G$ we
have a permutation matrix $P_g$; the map $g \rightarrow P_g$ is a
group homomorphism. Therefore $P_gP_h=P_hP_g, P(g^{-1}) = P_g^T$. We
have $P(1)=I$ and $\sum_{g\in G}P_g=J$. Hence the matrices $P_g$
forms an association scheme with $v-1$ classes.  Note the
association scheme obtained in this way is same as centralizer
algebra $\V_{\mathbb{C}}(G)$ which is further equal to group algebra
$\C[G]$. The restricted centralizer algebra $\V_r(G)$, consisting of
all real, symmetric matrices in $\V_{\mathbb{C}}(G)$ is a real
subalgebra of $\V_{\mathbb{C}}(G)$, which is closed under Hadamard
product and spanned by the matrices $P_g+P^T_g \;\forall g\in G$.
That is $\V_r(G)$ is a symmetric association scheme. Further if we
assume that  $G$ is an elementary abelian 2-group, then
$\C[G]=\V_r(G)=\V_{\mathbb{C}}(G)$. Consequently $\C[G]$ is a
Bose-Mesner algebra and $\dim(\C[G])=n$. Now from
Corollary~\ref{cor:mul} there exists no graph $X$ with $n>2$
vertices such that $\A(X)=\C[G]$.
\begin{definition}
Given an $m$-association scheme on $v$-symbols a PBIBD(m) with $m$
associate classes is defined as follows.
A PBIBD(m) with $m$ associate classes is an arrangement of $v$
symbols in $b$ sets of size $k(<v)$ such that
\begin{enumerate}
\item Every symbol occurs at most once in a set.
\item Every symbol occurs in $r$ sets.
\item Two symbols $\alpha$ and $\beta$ occur in $\lambda_i$ sets,
if $(\alpha,\beta)=i$ and $\lambda_i$ is independent of symbols
$\alpha$ and $\beta$.
\end{enumerate}
\end{definition}

The numbers $v, b, r, k,\lambda_i$ are the parameters of the PBIBD.
The PBIBD is usually identified by the association scheme of the
symbols. For more information on design theory  the reader is
referred to
 Raghavarao~\cite{rao}.  For any
PBIBD(t) we will write the parameters as
$(v,b,r_1,k_1,\lambda_1,\ldots ,\lambda_t)$ where $v$ is the number
of points , $b$ is the number of blocks, $r_1$ is called the replication number, $k_1$ is the number of elements in any block of the design.

If $N$ is the incidence matrix of a design $\mathcal{D}$, then we say that the design $\mathcal{D}$ is obtained from the  graph $X$ if $NN^T\in \A(X)$. For example, if $X$ is a pattern polynomial graph with $n$ vertices ,  then
\begin{enumerate}
\item Every BIBD with $n$ points is obtained from $X$. In fact every t-design with $n$ points and  $t\geq 2$ is obtained from $X$.
\item Let a graph  $Y$ be a polynomial in $X$  and  $\mathcal{D}_1=(V(Y),E(Y))$ be a design with points as vertices of graph $Y$ and blocks as edges of $Y$. Then $NN^T=D+A(Y)\in \A(X)$ where $N$ is the incidence matrix of design $\mathcal{D}_1$, which is also 0,1-incidence matrix of the graph $Y$ and $D$ is the diagonal matrix with diagonal entries are degree of vertices of $Y$. Note that $\mathcal{D}_1$ is a PBIBD(r), where $r$ is the degree of the minimal polynomial of $X$.
\end{enumerate}

Let $v$ be any vertex in a graph  $Z$, $N(v)$ be the set of vertices
which are adjacent to $v$ in $Z$, $\mathcal{B}_2=\{N(v)|v\in V(Z)\}$
and $\mathcal{B}_3=\{N(v)\cup\{v\}|v\in V(Z)\}$. If
$\mathcal{D}_2=(V(Z),\mathcal{B}_2)$ and
$\mathcal{D}_3=(V(Z),\mathcal{B}_3)$, then $\mathcal{D}_2$ and
$\mathcal{D}_3$  are designs on $V(Z)$ with
$n=|V(Z)|=|\mathcal{B}_2|=|\mathcal{B}_3|=b$. Hence  $A(Z)$ is the
incidence matrix of the designs $\mathcal{D}_2$ and $I+A(Z)$ is the
incidence matrix of $\mathcal{D}_3$. If $Z$ is a $k$-regular graph,
then all blocks in the design $\mathcal{D}_2(\mathcal{D}_3)$ are
$k$-subsets ($k+1$ subsets) of $V(Z)$. And  each vertex  belongs
exactly $k$ ($k+1$) blocks. In other words
$\mathcal{D}_2(\mathcal{D}_3)$ is a 1-design with $r_1=k_1$. Further
if we assume  $Z$ is  a polynomial in a pattern polynomial graph
$X$, then the designs $\mathcal{D}_2$ and $\mathcal{D}_3$  are
PBIBD(r)s where $r$ is the degree of minimal polynomial of $X$.
Hence we have the following result.

\begin{lemma}
Let a k-regular graph $Y$ be a polynomial in a pattern polynomial graph $X$
with $n$ vertices and $\mathcal{D}_i,i=2,3$ are designs on $V(Y)$  defined as above. Then $\mathcal{D}_2(\mathcal{D}_3)$
is a   PBIBD(r) obtained from $X$ with parameters
$(n,n,k,k,\lambda_1,\ldots ,\lambda_r)((n,n,k+1,k+1,\lambda^{'}_1,\ldots ,\lambda^{'}_r))$ for some $\la_i$ and $\la_i^{'}$.
\end{lemma}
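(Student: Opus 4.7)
The plan is to use the standard basis of $\A(X)$ as the underlying association scheme of the PBIBD, and then to show that $NN^T$ lies in $\A(X)$ so that its expansion in that basis directly yields the required $\la_i$'s.

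Because $X$ is pattern polynomial, $\A(X)=\Ll(X)=\c(X)$ is a coherent algebra of dimension $r$ whose standard basis consists of the pattern matrices $\{I=P_1,P_2,\ldots,P_r\}$ (Theorem~\ref{thm:unique}). These are symmetric, mutually Hadamard-disjoint $0,1$-matrices summing to $J$, and $\A(X)$ is closed under ordinary multiplication; these are precisely the axioms needed to view the basis as a symmetric association scheme (Bose--Mesner algebra) on $V(X)$. Accordingly, I declare two vertices $\al,\be$ to be $i$-th associates when $(P_i)_{\al\be}=1$; this is the association scheme with respect to which both PBIBDs will be defined.

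Next I turn to $\mathcal{D}_2$ with incidence matrix $N=A(Y)$. Because $Y$ is $k$-regular, every block $N(v)$ has size $k$ and every vertex lies in exactly $k$ blocks (it belongs to the neighborhood of each of its $k$ neighbors in $Y$), so the block size and replication number are both $k$. For the concurrence numbers, compute
\[
NN^T \;=\; A(Y)\,A(Y)^T \;=\; A(Y)^2.
\]
Since $Y$ is a polynomial in $X$, $A(Y)\in\A(X)$ and hence $A(Y)^2\in\A(X)$; write $A(Y)^2=\sum_{i=1}^{r}\la_i P_i$. The $(\al,\be)$-entry of $NN^T$ is the number of common neighbors of $\al$ and $\be$ in $Y$, i.e.\ the number of blocks of $\mathcal{D}_2$ containing both vertices; by the Hadamard-disjointness of the $P_i$'s this entry depends only on the associate class of $(\al,\be)$ and equals the corresponding $\la_i$. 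Hence $\mathcal{D}_2$ is a PBIBD with parameters $(n,n,k,k,\la_1,\ldots,\la_r)$, and it is obtained from $X$ since $NN^T\in\A(X)$.

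The argument for $\mathcal{D}_3$ with $N=I+A(Y)$ is parallel: each block $N(v)\cup\{v\}$ has size $k+1$ and each vertex lies in exactly $k+1$ blocks (its own plus those of its $k$ neighbors), and
\[
NN^T \;=\; (I+A(Y))^2 \;=\; I + 2A(Y) + A(Y)^2 \;\in\; \A(X),
\]
so expanding $NN^T=\sum_{i=1}^{r} \la'_i P_i$ yields the PBIBD with parameters $(n,n,k+1,k+1,\la'_1,\ldots,\la'_r)$. The only real subtlety is to notice that the pattern polynomial hypothesis is exactly what promotes the coefficients of the $P_i$'s in $NN^T$ to genuine association-scheme concurrence numbers rather than arbitrary entries indexed by pattern positions; beyond this observation, the proof is a direct expansion in the pattern-matrix basis.
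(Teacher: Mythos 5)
Your proof is correct and follows essentially the same route as the paper: the paper likewise takes $A(Y)$ and $I+A(Y)$ as the incidence matrices, reads off block size and replication number $k$ (resp.\ $k+1$) from $k$-regularity, and obtains the concurrence parameters from the fact that $NN^T$ lies in $\A(X)$, which as a Bose--Mesner algebra with standard basis $\{P_1,\ldots,P_r\}$ supplies the underlying association scheme. Your write-up merely makes explicit the expansion $NN^T=\sum_i\la_iP_i$ that the paper leaves implicit.
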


\section{On the polynomial of a pattern polynomial graph} \label{sec:five}

A graph $Y$ is said to be a polynomial in a graph $X$ if $A(Y)\in
\A(X)$. For an arbitrary graph $X$ it seems difficult to find
whether a given graph is polynomial in $X$ or not. This question is answered for orbit poynomial graph and distance regular graphs by  [Robert
A.Beezer~\cite{R:B4}] and [Paul M.Weichsel~\cite{P:W1}] respectively. In the following lemma we generalize those results to all pattern polynomial graphs.
\begin{lemma}
 If $X$ is a pattern polynomial graph with standard basis
$\{P_1,P_2,\ldots ,P_r\}$ where $P_1=I$, then a graph $Y$ is a
polynomial in $X$ if and only if $A(Y)=\sum_{i=2}^{r-1}a_iP_i$ where
$a_i\in\{0,1\}$.
\end{lemma}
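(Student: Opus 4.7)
The plan is to recognize this as an essentially immediate consequence of Corollary~\ref{cor:sb} once the role of the hypothesis $P_1=I$ is isolated. Since $X$ is pattern polynomial, $\A(X)=\Ll(X)=\c(X)$, so $\{P_1,\ldots,P_r\}$ is simultaneously the set of pattern matrices, a $\C$-basis of $\A(X)$, and the standard basis of the coherent algebra $\c(X)$ (a basis of mutually disjoint $0,1$-matrices whose sum is $J$).

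For the forward direction, I would start from the assumption that $Y$ is a polynomial in $X$, i.e.\ $A(Y)\in\A(X)$. Since $A(Y)$ is a $0,1$-matrix sitting inside the coherent algebra $\A(X)=\c(X)$, Corollary~\ref{cor:sb} immediately gives a representation $A(Y)=\sum_{i=1}^{r}a_i P_i$ with $a_i\in\{0,1\}$. The only thing to check is that the identity component vanishes. Here is where $P_1=I$ is used: because the standard basis consists of \emph{mutually disjoint} $0,1$-matrices, every $P_i$ with $i\ge 2$ is disjoint from $P_1=I$ and therefore has zero diagonal. Since $A(Y)$ is the adjacency matrix of a simple graph its diagonal is zero, which forces $a_1=0$ and yields the required expansion.

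For the converse, I would take any combination $M:=\sum_{i\ge 2}a_i P_i$ with $a_i\in\{0,1\}$ and argue two things. First, $M\in\A(X)$ by linearity, so whenever $M$ is the adjacency matrix of a graph $Y$, that graph is automatically a polynomial in $X$. Second, $M$ really is a valid adjacency matrix: its entries are in $\{0,1\}$ because the $P_i$'s are pairwise disjoint $0,1$-matrices; its diagonal is zero by the disjointness argument above; and it is symmetric because $A$ is symmetric, so by Observation~\ref{obs:smallest}(4) each $P_i$ is symmetric and hence so is any $\{0,1\}$-combination of them.

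The main potential obstacle is conceptual rather than technical: one has to notice that the single hypothesis $P_1=I$ (rather than, say, a more general situation where $I$ splits as a sum of several diagonal basis elements) is precisely what guarantees that the off-identity basis matrices are off-diagonal, and thus that restricting the sum to indices $i\ge 2$ is both necessary and sufficient. No nontrivial computation is required beyond invoking Corollary~\ref{cor:sb} and Observation~\ref{obs:smallest}.
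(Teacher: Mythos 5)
Your proof is correct and follows the same route as the paper, which simply cites Corollary~\ref{cor:sb}; you have usefully made explicit the role of $P_1=I$ (forcing $a_1=0$ via zero diagonal) and the verification that the converse combination is a genuine adjacency matrix. Note only that the statement's upper summation index $r-1$ appears to be a typo for $r$, which your formulation $\sum_{i\ge 2}$ silently and correctly repairs.
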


\begin{proof}
Direct consequence of Corollary~\ref{cor:sb}.
\end{proof}

\begin{cor}
There are  $2^{r-1}$ graphs in the adjacency algebra of a pattern
polynomial graph $X$, where $r$ is the degree of the minimal
polynomial of $A(X)$.
\end{cor}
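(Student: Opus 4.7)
The plan is to derive this corollary as a direct counting consequence of the preceding lemma, which characterises every graph that is a polynomial in $X$ as a sum of a subset of the non-identity pattern matrices. First I would invoke the lemma to conclude that a graph $Y$ is polynomial in $X$ precisely when $A(Y) = \sum_{i=2}^{r} a_i P_i$ with each $a_i \in \{0,1\}$ (here $\{P_1=I,P_2,\dots ,P_r\}$ is the standard basis of $\A(X)$, and $r = \dim \A(X)$ equals the degree of the minimal polynomial of $A(X)$ since $X$ is pattern polynomial). This reduces the problem to counting subsets of $\{P_2,\dots ,P_r\}$, of which there are exactly $2^{r-1}$.

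Next I would verify that every such subset sum is genuinely the adjacency matrix of a simple graph, so that distinct subsets do yield distinct graphs. Symmetry is automatic since $A$ is symmetric forces all pattern matrices to be symmetric (Observation~\ref{obs:smallest}(4)). The diagonal is zero because $P_i \odot I = 0$ for $i\geq 2$ by disjointness of the standard basis. Distinctness of the resulting matrices follows from linear independence of $P_2,\dots ,P_r$, so distinct coefficient vectors give distinct matrices.

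The only subtlety, which serves as the main conceptual point rather than a real obstacle, is recognising that the empty graph (all $a_i = 0$) and $X$ itself (whichever subset realises $A(X)$) are both counted, and that the $2^{r-1}$ count is over matrices rather than isomorphism classes of graphs. Once this is clarified, the corollary follows immediately from the bijection between subsets of $\{P_2,\dots ,P_r\}$ and polynomial subgraphs, and no further computation is required beyond citing Corollary~\ref{cor:sb} and the preceding lemma.
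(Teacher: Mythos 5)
Your proof is correct and follows exactly the route the paper intends: the corollary is an immediate count of the $2^{r-1}$ subsets of $\{P_2,\dots,P_r\}$ permitted by the preceding lemma (whose upper summation limit should indeed read $r$ rather than $r-1$, as you implicitly correct). Your added verifications (symmetry, zero diagonal via disjointness from $P_1=I$, distinctness via linear independence, inclusion of the empty graph) are all sound and merely make explicit what the paper leaves unsaid.
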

Another trivial fact is as follows.
\begin{lemma}
Let a graph $Y$ be a polynomial in a pattern polynomial graph $X$,
then $\c(Y)\subseteq \c(X)$.
\end{lemma}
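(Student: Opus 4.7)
The plan is to exploit the minimality of $\c(Y)$ together with the defining property of pattern polynomial graphs, so that the containment becomes essentially immediate.

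First I would unpack the hypothesis. Since $X$ is a pattern polynomial graph, Lemma~\ref{lem:pp} gives $\A(X)=\c(X)$. The assumption that $Y$ is a polynomial in $X$ means, by definition, that $A(Y)\in \A(X)$. Combining these two facts, $A(Y)\in \c(X)$.

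Next I would invoke minimality. The coherent closure $\c(Y)$ is defined as the smallest coherent algebra containing $A(Y)$. Since $\c(X)$ is itself a coherent algebra and it contains $A(Y)$, minimality forces $\c(Y)\subseteq \c(X)$, which is the desired conclusion.

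There is really no obstacle here, the statement is a direct corollary of the pattern polynomial hypothesis and the universal property of the coherent closure. The only thing to be careful about is to cite Lemma~\ref{lem:pp} explicitly so that the reader sees why $A(Y)$ lives in a coherent algebra and not merely in the adjacency algebra; once that identification is made, the proof is a one-line application of minimality.
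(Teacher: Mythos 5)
Your proof is correct and is exactly the one-line minimality argument the paper has in mind (the paper states this lemma without proof, calling it a trivial fact). One small remark: the appeal to Lemma~\ref{lem:pp} is not actually needed, since $A(Y)\in\A(X)\subseteq\c(X)$ holds for any graph $X$, so the conclusion $\c(Y)\subseteq\c(X)$ follows from minimality without assuming $X$ is pattern polynomial.
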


If a graph $Y$ is a polynomial in a pattern polynomial graph $X$,
then $\c(Y)$ is a symmetric (every matrix in $\c(Y)$ is symmetric)
commutative algebra. Hence 

\begin{enumerate}
\item $Y$ is a walk regular graph,
\item  $Y$ is a strongly distance-balanced graph, from Lemma~\ref{lem:dp},
\item  $Y$ has a multiple eigenvalue, whenever $Y\ne K_2$, from Corollary~\ref{cor:mul},
\item   $\dim(\c(Y))\leq n-1$, from Corollary~\ref{cor:mul}. Further if the number of vertices in $Y$ is odd, then $\dim(\c(Y))\leq \frac{n+1}{2}$.
\end{enumerate}

Now it is interesting to answer the following  question:
 If $Y$ is a graph
such that $\c(Y)$ is symmetric commutative algebra, then ``does there
exist a pattern polynomial graph $X$ such that $Y$ is a polynomial
in $X$?''.  For example, if $Y$ is a circulant graph (Cayley graph
on cyclic group) with $n$ vertices, then clearly $\c(Y)$ is
symmetric commutative algebra and it is also known that $Y$ is a
polynomial in cycle graph $C_n$, which is a pattern polynomial
graph.

If a graph $Y$ is a polynomial in a graph $X$, then there exists a unique polynomial $p_Y(x)\in\C[x]$, with  degree less than the degree of the minimal of $X$, such that $A(Y)=p_Y(A(X))$. It is  called representor polynomial of $Y$.
If $X$ is a pattern polynomial graph and $A(Y)=\sum_ia_iP_i$, then $p_Y=\sum_ia_ip_{X_{P_i}}$. If a graph $Y$ is a polynomial in $X$ with representor
polynomial  $p_Y(x)$, then the eigenvalues of $A(Y)$ are
$p_Y(\lambda_i)$, where $\lambda_i\;(0\leq i\leq n-1)$ are
eigenvalues of $A(X)$. 

The following result gives whether a graph $Y$ which is a polynomial in a graph $X$ is singular or not. Recall a graph is said to be singular  if its adjacency matrix is singular.

\begin{lemma}
Let a graph $Y$ be a polynomial in a graph $X$.
Then $Y$ is singular if and only if $\deg(\gcd(p(x),p_Y(x)))\geq 1$, where $p(x)$ is the minimal polynomial of $A(X)$ and  $p_Y(x)$ is the representor polynomial of $Y$ with respect to $X$.
\end{lemma}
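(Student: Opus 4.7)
The plan is to translate singularity of $Y$ into a statement about shared roots of the two polynomials, using the fact that the adjacency matrix of a graph is real symmetric and hence diagonalizable.

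First I would invoke Lemma~\ref{lem:aa3}: since $A(X)$ is symmetric, it is diagonalizable, so its minimal polynomial $p(x)$ factors into distinct linear factors over $\C$, whose roots are exactly the distinct eigenvalues $\lambda_1,\dots,\lambda_s$ of $A(X)$. Next, because $A(Y)=p_Y(A(X))$, the spectral mapping theorem (or direct computation in the eigenbasis of $A(X)$) gives that the eigenvalues of $A(Y)$ are exactly $p_Y(\lambda_1),\dots,p_Y(\lambda_s)$ (with the appropriate multiplicities).

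Therefore $Y$ is singular, i.e.\ $\det(A(Y))=0$, if and only if $0$ is an eigenvalue of $A(Y)$, which happens if and only if $p_Y(\lambda_i)=0$ for some $i$. Equivalently, some $\lambda_i$ is a common root of $p(x)$ and $p_Y(x)$. Since $p(x)$ splits into distinct linear factors over $\C$, a common root of $p$ and $p_Y$ exists if and only if $\gcd(p(x),p_Y(x))$ has a root in $\C$, which over an algebraically closed field is equivalent to $\deg(\gcd(p(x),p_Y(x)))\geq 1$. This gives both directions.

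The only mild subtlety is making sure the eigenvalues of $p_Y(A(X))$ really are $\{p_Y(\lambda_i)\}$ and nothing else; this is immediate from diagonalizing $A(X)=U\,\text{diag}(\lambda_1,\dots,\lambda_n)\,U^{-1}$ and applying $p_Y$ entry-wise on the diagonal. I do not anticipate any serious obstacle; the argument is essentially a one-line spectral observation once Lemma~\ref{lem:aa3} has been quoted to ensure that the roots of $p(x)$ really do exhaust the eigenvalue set of $A(X)$.
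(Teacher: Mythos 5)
Your proof is correct, and it follows exactly the route the paper intends: the paper states the lemma without proof, but the paragraph immediately preceding it records the key fact you use, namely that the eigenvalues of $A(Y)=p_Y(A(X))$ are $p_Y(\lambda_i)$ for the eigenvalues $\lambda_i$ of $A(X)$. Your additional care in invoking Lemma~\ref{lem:aa3} to identify the roots of $p(x)$ with the eigenvalues of the symmetric matrix $A(X)$, and in using algebraic closedness of $\C$ to pass from a common root to $\deg(\gcd)\geq 1$, fills in precisely the details the paper leaves implicit.
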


\end{document}